\pgfplotsset{compat=1.18}
\theoremstyle{plain}
\newtheorem{theorem}{Theorem}[section]
\newtheorem{corollary}[theorem]{Corollary}
\newtheorem{lemma}[theorem]{Lemma}
\newtheorem{conjecture}[theorem]{Conjecture}
\newtheorem{proposition}[theorem]{Proposition}
\theoremstyle{definition}
\newtheorem{definition}[theorem]{Definition}
\newtheorem{remark}[theorem]{Remark}
\newcommand{\eg}{{e.g.,}\ }
\newcommand{\nref}[2]{\hyperref[#1]{#2}} 
\newcommand{\negquad}{\hspace{-1em}}
\newcommand{\negsp}{\hspace{-0.25em}}
\newcommand{\de}{\vcentcolon=}
\newcommand{\Unif}[1]{\operatorname{Unif}#1}
\newcommand{\T}{{\rm T}}
\newcommand{\spec}{\operatorname{spec}}
\newcommand{\spn}{\operatorname{span}}
\newcommand{\coker}{\operatorname{coker}}
\newcommand{\diag}{\operatorname{diag}}
\newcommand{\rank}{\operatorname{rank}}
\newcommand{\Hom}{\operatorname{Hom}}
\newcommand{\Aut}{\operatorname{Aut}}
\newcommand{\Sur}{\operatorname{Sur}}
\newcommand{\Ext}{\operatorname{Ext}}
\newcommand{\defbold} [1]{\expandafter\newcommand\csname b#1\endcsname{\mathbf{#1}}}
\newcommand{\defcal} [1]{\expandafter\newcommand\csname c#1\endcsname{\mathcal{#1}}}
\newcommand{\defbb} [1]{\expandafter\newcommand\csname bb#1\endcsname{\mathbb{#1}}}
\newcommand{\defscr} [1]{\expandafter\newcommand\csname s#1\endcsname{\mathscr{#1}}}
\newcommand{\deffrak} [1]{\expandafter\newcommand\csname f#1\endcsname{\mathfrak{#1}}}
\newcommand{\defbracketed} [1]{\expandafter\newcommand\csname bc#1\endcsname{\bc{#1}}}
\newcommand{\deftilde} [1]{\expandafter\newcommand\csname ti#1\endcsname{\tilde{#1}}}
\newcommand{\defoverline} [1]{\expandafter\newcommand\csname ol#1\endcsname{\overline{#1}}}
\newcommand{\defvec} [1]{\expandafter\newcommand\csname vec#1\endcsname{\vec{#1}}}
\def\b1{\mathbf{1}}
\def\bb1{\mathbbm{1}}
\newcommand{\bc}[1]{{\text{\fontsize{5}{5}\selectfont{$($}{\hspace*{-0.3pt}}\fontsize{6}{4}\selectfont$#1$\hspace*{-0.1pt}\fontsize{5}{5}\selectfont{$)$}}}}
\newcommand{\bAcomplement}{\bA^{\mkern-3mu {c}}}
\begin{document}
\begin{frontmatter}
\title{Cokernel statistics for walk matrices \\ 
of directed and weighted random graphs}

\begin{aug}
    \author{\fnms{Alexander}~\snm{Van Werde}},
    \address{Eindhoven University of Technology, Department of Mathematics and Computer Science\\
    \href{mailto:a.van.werde@tue.nl}{a.van.werde@tue.nl}}
\end{aug}

\begin{abstract}
    The \emph{walk matrix} associated to an $n\times n$ integer matrix $\bX$ and an integer vector $b$ is defined by $\bW \de (b,\bX b,\ldots,\bX^{n-1}b)$.
    We study limiting laws for the cokernel of $\bW$ in the scenario where $\bX$ is a random matrix with independent entries and $b$ is deterministic. 
    Our first main result provides a formula for the distribution of the $p^m$-torsion part of the cokernel, as a group, when $\bX$ has independent entries from a specific distribution. 
    The second main result relaxes the distributional assumption and concerns the $\bbZ[x]$-module structure. 

    The motivation for this work arises from an open problem in spectral graph theory which asks to show that random graphs are often determined up to isomorphism by their (generalized) spectrum. 
    Sufficient conditions for generalized spectral determinacy can namely be stated in terms of the cokernel of a walk matrix.  
    Extensions of our results could potentially be used to determine how often those conditions are satisfied. 
    Some remaining challenges for such extensions are outlined in the paper.  
\end{abstract}

\end{frontmatter}

\renewcommand{\thefootnote}{\fnsymbol{footnote}} 
\footnotetext{
    \emph{\href{https://mathscinet.ams.org/mathscinet/msc/msc2020.html}{MSC2020 subject classification:}} 05C50, 15B52, 60B20.}
\footnotetext{\emph{Keywords and phrases:} Determined by spectrum, walk matrix, cokernel, random graph.}     
\renewcommand{\thefootnote}{\arabic{footnote}} 

\section{Introduction}
What information about a graph is encoded in the spectrum of its adjacency matrix? 
A well-known conjecture by van Dam and Haemers \cite{van2003graphs} suggests that the answer to this question is \emph{all information in the typical case} in the sense that almost all graphs are determined up to isomorphism by their spectrum.
Unfortunately, progress towards that conjecture has been fairly limited due to the fact that there are essentially no known general-purpose methods to prove that a graph is determined by its spectrum. 
There are many more proof techniques available to prove that a graph's spectrum does \emph{not} determine it than to show that it does \cite{schwenk1973almost,godsil1982constructing,halbeisen1999generation,abiad2012cospectral,butler2010note}.

In view of this, it is intriguing that a sufficient condition for \emph{generalized} spectral determinacy was discovered in 2006 by Wang and Xu \cite{wang2006sufficient,wang2006excluding}. 
The \emph{generalized spectrum} of a graph $G$ here refers to the ordered pair $(\spec(\bA), \spec(\bAcomplement))$ consisting of the spectra of the adjacency matrix $\bA$ of $G$ and of the adjacency matrix $\bAcomplement$ of its complement graph.
Refinements of the results of \cite{wang2006sufficient,wang2006excluding} have given rise to an active area of research in recent years; see \eg \cite{wang2017simple,wang2013generalized,qiu2023smith,qiu2021oriented,li2021arithmetic,wang2022haemers}. 
For definiteness, let us state a refinement which is particularly insightful and motivates the results of the current paper. 

Define a matrix with integer entries by $\bW \de (\bA^{j-1}e)_{j=1}^n$ where $e = (1,\ldots,1)^{\T}$ is the all-ones vector and $n$ is the number of vertices of $G$.
Let $\coker(\bW) \de \bbZ^n/\bW(\bbZ^n)$ denote the \emph{cokernel} of this matrix.
Then, the following is an equivalent rephrasing of \cite[Theorem 1.1]{wang2017simple}; see \cite[p.2]{qiu2023smith}. 
\begin{theorem}[Wang, \texorpdfstring{\cite{wang2017simple}}{[]}]\label{thm: Wang}
    Let $G$ be a simple graph.
    Assume that there exists an odd and square-free integer $m$ such that 
    \begin{align} 
        \coker(\bW) \cong (\bbZ/2\bbZ)^{\lfloor n/2 \rfloor} \oplus (\bbZ/m\bbZ ) \nonumber
    \end{align} 
    as an Abelian group. 
    Then, $G$ is determined by its generalized spectrum up to isomorphism.   
\end{theorem}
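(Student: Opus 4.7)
The plan is to combine the Johnson--Newman characterization of generalized cospectrality with a Smith-normal-form analysis of $\bW$, carried out prime-by-prime. By a classical theorem of Johnson and Newman, a graph $H$ on $n$ vertices is generalized cospectral to $G$ if and only if there exists a rational orthogonal matrix $\bQ$ with $\bQ e = e$ and $\bQ^{\T} \bA \bQ = \bA_H$, where $\bA$ is the adjacency matrix of $G$ and $\bA_H$ that of a cospectral mate $H$. Any integer orthogonal matrix whose row sums all equal $1$ must be a permutation matrix, and such a permutation conjugating $\bA$ to $\bA_H$ witnesses an isomorphism $G \cong H$; so it suffices to show that every such $\bQ$ has integer entries. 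From $\bQ e = e$ and $\bA \bQ = \bQ \bA_H$ one obtains $\bQ \bW_H = \bW$, hence $\bQ = \bW \bW_H^{-1}$. The \emph{level} $\ell$ of $\bQ$---the smallest positive integer with $\ell \bQ \in \bbZ^{n\times n}$---therefore divides the largest invariant factor of $\bW$, which under the hypothesis equals $2m$. It remains to rule out $\ell > 1$.

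For an odd prime $p \mid m$, the square-freeness of $m$ gives $v_p(\ell) \leq 1$, so one only needs to exclude $p \mid \ell$. If $p \mid \ell$, the reduction $\overline{\ell \bQ} \in \bbF_p^{n\times n}$ is a nonzero matrix simultaneously satisfying $\overline{\ell \bQ}\,(\overline{\ell \bQ})^{\T} \equiv 0 \pmod{p}$ (from orthogonality), $\overline{\ell \bQ}\, e \equiv 0 \pmod{p}$ (from $\bQ e = e$), $\overline{\ell \bQ}\, \overline{\bW_H} \equiv 0 \pmod{p}$ (from $\bQ \bW_H = \bW$), and $\overline{\ell \bQ}\, \bA_H \equiv \bA\, \overline{\ell \bQ} \pmod{p}$. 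A cardinality argument using $|\coker(\bW_H)| = |\coker(\bW)| = 2^{\lfloor n/2 \rfloor} m$ shows that the $p$-primary part of $\coker(\bW_H)$ is cyclic of order $p$, so $\rank_{\bbF_p}(\overline{\bW_H}) = n-1$; this forces $\overline{\ell \bQ}$ to have rank at most one. The remaining identities---isotropy of both factors in the rank-one decomposition, vanishing inner product with $e$, and eigenvector compatibility with $\bA_H$---can then be combined to contradict $\overline{\ell \bQ} \neq 0$. Hence $\ell$ is a power of $2$, so $\ell \in \{1,2\}$.

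The final step, eliminating $\ell = 2$, is the crux of the argument. Here the difficulty is structural: the $2$-part $(\bbZ/2\bbZ)^{\lfloor n/2 \rfloor}$ is known to be the minimum possible $2$-primary part of $\coker(\bW)$ for any simple graph, so the hypothesis is \emph{tight} at the prime $2$ and the rank/isotropy argument used for odd primes loses its slack. I would approach it by applying $\bQ$ also to the complement, whose adjacency matrix is $\bAcomplement = J - \bI - \bA$ (using $\bQ^{\T} J \bQ = J$, which follows from $\bQ e = e$), obtaining a second intertwining relation, and then performing a careful modulo-$4$ analysis of $2\bQ$ that exploits the $\{0,1\}$-valued nature of $\bA$ and $\bA_H$ together with their vanishing diagonals. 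The target is to show $2\bQ \in 2\bbZ^{n\times n}$, which contradicts the minimality of $\ell$. The main obstacle of the whole proof is precisely this $p=2$ step: one cannot rely on the generic Smith-form and orthogonality picture alone and must extract finer information from the combinatorial structure of adjacency matrices.
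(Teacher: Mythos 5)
A point of reference first: the paper does not prove this statement at all --- it is quoted (in rephrased form) from Wang \cite{wang2017simple}, with the cokernel formulation taken from \cite{qiu2023smith}. So your proposal has to be measured against Wang's original argument, and your outline does follow exactly that route: the Johnson--Newman characterization of generalized cospectrality by a rational orthogonal $\bQ$ with $\bQ e=e$ and $\bQ^{\T}\bA\bQ=\bA_H$, the observation that the level $\ell$ of $\bQ$ divides the last invariant factor of $\bW$ (here $2m$), and a prime-by-prime exclusion of $p\mid\ell$, with integrality of $\bQ$ then forcing $\bQ$ to be a permutation matrix.

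However, as written the proposal is a plan rather than a proof, and the gaps sit exactly where the theorem is hard. For an odd prime $p\mid m$ you correctly reduce to $\rank_{\bbF_p}(\overline{\bW_H})=n-1$ (this uses $\bW_H^{\T}\bW_H=\bW^{\T}\bW$, which you invoke implicitly via the ``cardinality argument'' and should justify: the Gram entries are walk counts determined by the generalized spectrum) and conclude that $\overline{\ell\bQ}$ has rank at most one; but the final contradiction is only asserted (``can then be combined to contradict $\overline{\ell\bQ}\neq 0$''), and it is not automatic --- one must actually play off the isotropy relations, $\overline{\ell\bQ}\,e\equiv 0$, the intertwining with $\bA_H$, and the square-freeness of $m$ against each other (in Wang's argument this is where $p\mid\ell$ is shown to force $p^2\mid\det\bW$). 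More seriously, the case $\ell=2$ --- which you yourself identify as the crux, and which is precisely where the hypothesis $\coker(\bW)_{2}\cong(\bbZ/2\bbZ)^{\lfloor n/2\rfloor}$, equivalently $2^{-\lfloor n/2\rfloor}\det\bW$ odd, actually enters --- is left entirely as an intention (``I would approach it by\ldots'', ``the target is to show\ldots''). In Wang's paper this step occupies the bulk of the proof and requires a delicate analysis modulo $2$ and $4$ of $\bW^{\T}\bW$ and of the $\bbF_2$-kernel of $\bW$ for simple graphs; nothing in the sketch substitutes for it. So the strategy is the correct (and indeed the standard) one, but the proposal as it stands does not establish the theorem.
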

\newpage
It is believed that the conditions of \Cref{thm: Wang} are satisfied for a nonvanishing fraction of all simple graphs \cite[Conjecture 2]{wang2017simple}.   
For comparison, the best known bound on the non-generalized problem is due to Koval and Kwan \cite{koval2023exponentially} who recently established that there are at least $e^{cn}$ graphs on $n$ vertices which are determined by their spectrum. 
The result from \cite{koval2023exponentially} represents a significant improvement relative to the previous long-standing barrier of $e^{c\sqrt{n}}$ but still only yields a quickly vanishing fraction of all $(1-o(1))2^{n(n-1)/2}/n!$ simple graphs.
So, a nonvanishing fraction being determined by (generalized) spectrum would signify a remarkable development.

However, even though criteria for generalized spectral determinacy have been known for almost 20 years now, it remains an open problem to prove that the criteria are indeed frequently satisfied. 
The current state of knowledge on the frequency of satisfaction is mostly limited to numerical studies \cite{wang2017simple,van2003graphs,haemers2004enumeration,wang2022haemers}.
The lack of theoretical work is surprising given that criteria for generalized spectral determinacy are an active area of research. 
A possible explanation is that it is not clear what proof techniques could be used.  
The current paper develops a novel line of attack by making a connection to proof techniques \cite{cheong2023cokernel,wood2019random,nguyen2020surjectivity,sawin2022moment} which were historically developed in the context of Cohen--Lenstra heuristics for the class groups of number fields \cite{cohen2006heuristics} and in the study of sandpile groups of random graphs \cite{clancy2015cohen}.

The problem is too challenging to solve in a single step, so we direct our efforts to a variant which is more convenient for technical reasons. 
Instead of simple graphs, we study random directed graphs with random edge weights. 
Concretely, given a $\bbZ^{n\times n}$-valued random matrix $\bX$ with independent entries and a vector $b\in \bbZ^n$, we study the cokernel of the associated \emph{walk matrix} $\bW \de (\bX^{j-1} b)_{j=1}^{n}$.
To explain the terminology, note that if $\bX$ is $\{0,1 \}^{n\times n}$-valued and interpreted as the directed adjacency matrix of a directed graph and $b = \bb1_S$ is the indicator vector of a subset $S\subseteq \{1,\ldots,n \}$, then $\bW_{i,j}$ counts the number of walks of length $j-1$ starting from $i$ with an endpoint in $S$.
Walk matrices of this kind are also of independent interest due to applications to control theory \cite{godsil2012controllable,sundaram2012structural,o2016conjecture} and graph isomorphism problems \cite{liu2022unlocking,godsil2012controllable,verbitsky2023canonization}.   

In future work, it would be interesting to pursue extensions of our results to the setting of simple graphs where \Cref{thm: Wang} is applicable\footnote{There is no known analogue of \Cref{thm: Wang} for directed graphs. 
(See however \cite{qiu2021oriented}.) 
Indeed, directed graphs are typically \emph{not} determined by their (generalized) spectrum because the transpose of a directed adjacency matrix has the same spectrum but typically corresponds to a different directed graph.}.
The adjacency matrix of an undirected random graph has to be symmetric, so its entries cannot be independent which makes the problem more difficult. 
We expect that our proof techniques will still give insights provided that appropriate modifications are made, but these modifications pose a nontrivial challenge; see \Cref{sec: FutureWork}.

\subsection{Results}
Given an Abelian group $H$ and a prime power $p^m$ we denote $H_{p^m} \de H/p^mH$. 
Then, the condition of \Cref{thm: Wang} is satisfied if and only if $\coker(\bW)_{p^2} \in  \{0,\bbZ/p\bbZ \}$  for every odd prime $p$ and $\coker(\bW)_{2^2} \cong (\bbZ/2\bbZ)^{\lfloor n/2\rfloor}$.   
To determine how frequently \Cref{thm: Wang} is applicable, it hence suffices to study the joint distribution of $\coker(\bW)_{p^2}$ over all primes $p$ when $\bX$ is the adjacency matrix of an undirected Erd\H{o}s--R\'enyi random graph and $b=e$.

In a directed and weighted setting, the following result gives a remarkably simple formula for the limiting marginal distribution for a single prime $p$: 
\begin{theorem}\label{thm: MainResult_Uniform}
    Fix a prime $p$ and an integer $m\geq 1$. 
    For every $n\geq 1$, let $\bX$ be a $\bbZ^{n\times n}$-valued random matrix with independent $\Unif\{0,1,\ldots,p^{m} - 1\}$-distributed entries and let $b\in \bbZ^{n}$ be a deterministic vector with $b\not\equiv 0 \bmod p$. \nopagebreak

    Fix some $\ell \geq 1$ and $0 = \lambda_0 \leq \lambda_1 \leq \cdots \leq \lambda_\ell \leq m$. 
    Let $i_0\de \#\{i\leq \ell : \lambda_i = m \}$ and denote $\delta_j \de \lambda_{\ell-j+1} - \lambda_{\ell -j}$.
    Then, as Abelian groups, 
    \begin{align*}
        \lim_{n\to \infty} \bbP\Bigl(\coker(\bW)_{p^m} \cong \bigoplus_{i=1}^\ell \frac{\bbZ}{p^{\lambda_i} \bbZ}\Bigr)  = \prod_{i=i_0}^\infty\Bigl(1-p^{-(i+1)}\Bigr)\prod_{j=1}^\ell p^{-j  \delta_j}. 
    \end{align*} 
\end{theorem}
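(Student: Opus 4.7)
To start, I would reduce to the case $b = e_1$ in $(\bbZ/p^m\bbZ)^n$. Because the entries of $\bX$ are $\Unif\{0,\ldots,p^m-1\}$, the reduction $\bX \bmod p^m$ is Haar-distributed on $M_n(\bbZ/p^m\bbZ)$ and thus invariant under $\bX \mapsto P\bX P^{-1}$ for any $P \in \GL_n(\bbZ/p^m\bbZ)$. Since $b$ has an entry that is a unit modulo $p$, it extends to a $\bbZ/p^m\bbZ$-basis of $(\bbZ/p^m\bbZ)^n$, so conjugating by an appropriate $P$ reduces to the case $b = e_1$ without altering the law of $\coker(\bW)_{p^m}$.

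The heart of the proof is an iterative revelation of the columns $y_j := \bX^j b \bmod p^m$ for $j = 0, 1, \ldots, n-1$. Setting $V_j = \spn_{\bbZ/p^m\bbZ}(y_0,\ldots,y_{j-1})$, $H_j = (\bbZ/p^m\bbZ)^n/V_j$, and $\tilde y_j = y_j + V_j \in H_j$, one has $H_n = \coker(\bW)_{p^m}$. The key lemma is that, conditional on $y_0, \ldots, y_{j-1}$, the image $\tilde y_j$ is uniformly distributed on $p^{v_{j-1}}H_j$, where $v_{j-1}$ is the $p$-adic valuation of $\tilde y_{j-1} \in H_{j-1}$ (with the convention $v_p(0) = m$). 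In the generic case, where $y_0,\ldots,y_{j-2}$ are $\bbF_p$-linearly independent and so extend to a $\bbZ/p^m\bbZ$-basis of $(\bbZ/p^m\bbZ)^n$, the argument is a direct computation in that basis: the constraints $\bX y_i = y_{i+1}$ for $i \leq j - 2$ pin down the first $j-1$ columns of $\bX$ while the remaining columns are i.i.d.\ uniform, and expanding $\bX y_{j-1}$ identifies $v_{j-1}$ as the $p$-adic valuation of the component of $y_{j-1}$ in the ``unconstrained'' basis directions. Degenerate cases, where some earlier $\tilde y_i$ vanishes, correspond to the chain ``staying put'' and are compatible with the same statement.

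Consequently, $(H_j, v_{j-1})$ evolves as a Markov chain on pairs (abelian $p$-group of exponent $\leq p^m$, valuation in $\{0,\ldots,m\}$), starting at $(H_1, v_0) = ((\bbZ/p^m\bbZ)^{n-1}, 0)$ and performing $n-1$ transitions before reaching $H_n$. To extract the claimed formula, I would enumerate the trajectories of this chain whose terminal group is isomorphic to $H_\lambda$ and sum their probabilities: the identity $\sum_{j=1}^\ell j\delta_j = |\lambda|$ (which follows from rearranging the definition of $\delta_j$) rewrites $\prod_{j=1}^\ell p^{-j\delta_j}$ as $1/|H_\lambda|$, while the Cohen-Lenstra-type factor $\prod_{i \geq i_0}(1-p^{-(i+1)})$ should emerge from the late-stage contribution of those parts of $\lambda$ that saturate at $m$ (and hence prevent the valuation $v$ from increasing further).

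The principal obstacle is this final enumeration step. Features of the problem—for instance, surjection moments such as $\bbE[\#\Sur(\coker(\bW)_{p^m}, \bbZ/p\bbZ)]$ appear to diverge as $n \to \infty$, since in the limit their contribution summed against the partition length produces a non-summable tail—suggest that the standard surjection-moments framework of \cite{wood2019random,nguyen2020surjectivity,sawin2022moment} cannot be applied verbatim to read off the distribution. A direct combinatorial analysis of the Markov chain transitions therefore seems unavoidable, and matching the truncation index $i_0$ to the number of parts of $\lambda$ equal to $m$ is where I expect most of the work to lie.
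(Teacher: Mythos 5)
Your proposal follows essentially the same route as the paper: you reveal the iterates $y_j=\bX^j b$ one at a time, and your statistic $v_{j-1}$ (the valuation of $\tilde y_{j-1}$ in $H_{j-1}$) is exactly the quantity $U_j$ of \eqref{eq:KnownHen}; your key lemma is the paper's \Cref{lem: Dynamics}, and the theorem then follows by multiplying transition probabilities as in \Cref{cor: Ut_Coker} and \Cref{prop: UnifNonasymptotic}. However, as written there are two genuine gaps. First, you establish the uniformity claim only in the ``generic case'' where $y_0,\ldots,y_{j-2}$ are independent over $\bbF_p$, and you assert that the only degenerate cases are those where some earlier $\tilde y_i$ vanishes. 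That is not so: as soon as some intermediate valuation satisfies $0<v_i<m$ --- which is precisely the event that produces a nontrivial cokernel, hence the case of interest --- the iterates are already dependent mod $p$, the quotient $H_j$ has torsion, and the pinned-columns/unconstrained-columns computation no longer applies verbatim. The claimed lemma is still true, but proving it requires the bookkeeping carried out in \Cref{lem: ModuleBasis} and \Cref{lem: UiCharacterization}: one maintains auxiliary vectors $v_1,\ldots,v_t$ of full rank mod $p$ with $\spn_{\bbZ/p^m\bbZ}(p^{\Lambda_i}v_i: i\leq t)$ equal to the span of the iterates, and one conditions on their equivalence class. Your sketch omits exactly this step.

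Second, the passage from the chain to the stated formula --- translating trajectories into isomorphism classes via $\coker(\bW)_{p^m}\cong\oplus_{t}\bbZ/p^{\Lambda_t}\bbZ$ and matching $i_0$ to the number of parts equal to $m$ --- is explicitly deferred. In the paper this is \Cref{cor: Ut_Coker} together with the short computation in \Cref{prop: UnifNonasymptotic}; given the transition probabilities $p^{-(n-t)(u_{t+1}-u_t)}$ it is a direct product computation (and in fact yields a non-asymptotic formula), so you somewhat overestimate its difficulty while underestimating the conditioning issue above. Your side remark that group-theoretic surjection moments such as $\bbE[\#\Sur_{\bbZ}(\coker(\bW)_{p^m},\bbZ/p\bbZ)]$ diverge in the limit is correct, and is indeed why the paper's moment-method argument for \Cref{thm: Sparse_MainResult} passes to $\bbZ[x]$-module quotients; but it plays no role in the proof of \Cref{thm: MainResult_Uniform}, which is exactly the Markov-chain computation you outline. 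The initial reduction to $b=e_1$ is valid but unnecessary, since the argument only ever uses that $b\not\equiv 0\bmod p$.
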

\newpage 
Note in particular that \Cref{thm: MainResult_Uniform} predicts that $\coker(\bW)_{p^2}\in \{0,\bbZ/p\bbZ \}$ with nonvanishing probability, at least for random directed and weighted graphs.
For example, the limiting probabilities associated with the primes $p=3$, $5$, and $7$ are approximately $0.75$, $0.91$, and $0.96$, respectively.
We here emphasize odd primes because, while \Cref{thm: MainResult_Uniform} also applies when $p=2$, it is known that the distribution of $\coker(\bW)_{2^m}$ is very different for simple and non-simple graphs; see \Cref{sec: FutureWork}. 
It would hence be ill-advised to use directed graphs as a model for the condition $\coker(\bW)_{2^2} \cong (\bbZ/2\bbZ)^{\lfloor n/2 \rfloor}$.

The proof of \Cref{thm: MainResult_Uniform} relies on interpretable combinatorial arguments. 
The downside is that the distributional assumption on the entries of $\bX$ plays a crucial role which makes the approach unsuitable for the study of unweighted graphs.
Fortunately, a different proof approach allows us to study $\coker(\bW)$ in a general setting which also covers the case where $\bX$ has $\{0,1 \}$-valued entries. 
Additionally, this allows us to gain insight on the joint law across different primes and the result even applies to sparse graphs. 

For this more general setting, it turns out to be essential to interact with all canonical structure on $\coker(\bW)$. 
Equip $\bbZ^n$ with the $\bbZ[x]$-module structure defined by $xv \de \bX v$ and note that $\bW(\bbZ^n)$ is precisely the $\bbZ[x]$-submodule of $\bbZ^n$ generated by $b$. 
Hence, the quotient $\coker(\bW)$ is canonically equipped with the structure of a $\bbZ[x]$-module.
We require some additional notation. 
Let $Q(x)\in \bbZ[x]$ be a monic polynomial and consider a prime power $p^m$. 
Then, given a $\bbZ[x]$-module $N$, we define a quotient module by $N_{p^m, Q} \de N/(p^mN + Q(x)N)$.
We further abbreviate $R_{p^m,Q} \de \bbZ[x]/(p^m\bbZ[x] + Q(x)\bbZ[x])$. 

Fix a finite collection of prime numbers $\sP$ and consider a scalar $\alpha >0$. 
Then, a $\bbZ$-valued random variable $Y$ is said to be \emph{$\alpha$-balanced mod $\sP$} if for all $p \in \sP$ and $y \in \bbZ/p\bbZ$, 
\begin{align}
    \bbP(Y \equiv y \bmod p) \leq 1-\alpha.\nonumber    
\end{align}
Given a ring $R$, recall that $\Ext^{1}_R(N,M)$ denotes the set of extensions of an $R$-modules $N$ by an $R$-module $M$ \cite[p.77]{weibel1994introduction} and denote $\Aut_R(N)$ and $\Hom_R(N,M)$ for the sets of $R$-module automorphisms and homomorphisms, respectively.  
\begin{theorem}\label{thm: Sparse_MainResult}  
    Fix a finite set of primes $\sP$.   
    For every $n\geq 1$ let $\bX$ be a $\bbZ^{n\times n}$-valued random matrix with independent entries, not necessarily identically distributed, such that each entry $\bX_{i,j}$ is $\alpha_n$-balanced mod $\sP$. 
    Further, let $b\in \bbZ^{n}$ be deterministic with $b\not\equiv 0\bmod p$ for every $p\in \sP$.

    Assume that $\lim_{n\to \infty} n\alpha_n/\ln(n) = \infty$.
    Then, for every integer $m\geq 1$, monic polynomial $Q\in \bbZ[x]$ of degree $\geq 1$, and collection of finite $R_{p^m,Q}$-modules $N_{p^m,Q}$: 
    \begin{enumerate}[leftmargin = 2em, label = (\arabic*)]
        \item The quotients $\coker(\bW)_{p^m,Q}$ associated with different primes $p\in \sP$ are asymptotically independent. 
        More precisely, as $\bbZ[x]$-modules,   
        \begin{align*}
            \lim_{n\to \infty}\bbP\bigl(\forall p\in \sP:\coker(\bW)_{p^m,Q} \cong N_{p^m,Q} \bigr) = \prod_{p\in \sP}\mu_{p^m,Q}(N_{p^m,Q} )
        \end{align*} 
        for certain probability measures $\mu_{p^m,Q}$ supported on finite $R_{p^m,Q}$-modules. 
        \item Suppose that $Q(x) \equiv \prod_{i=1}^{r_p} Q_{i,p}(x)^{q_{i,p}} \bmod p$ is the unique factorization of $Q\bmod p$ into powers of distinct monic irreducible polynomials $Q_{i,p} \in \bbF_p[x]$.
        Let $d_{i,p} \de \deg Q_{i,p}$ denote the degree of $Q_{i,p}$. 
        Then, the measure $\mu_{p^m,Q}$ is given by the following identity:       
        \begin{align*}
            \mu_{p^m,Q}\bigl(N_{p^m,Q}\bigr) ={}&{} \frac{1}{\#N_{p^m,Q} \#\Aut_{R_{p^m,Q}}(N_{p^m,Q}) }\prod_{i = 1}^{r_p}\prod_{j=1}^\infty  \\ 
            &\ \times \Bigl(1-\frac{\#\Ext_{R_{p^m,Q}}^1\bigl(N_{p^m,Q}, \bbF_p[x]/(Q_{i,p}(x)\bbF_p[x]) \bigr)}{\#\Hom_{R_{p^m,Q}}\bigl(N_{p^m,Q},\bbF_p[x]/(Q_{i,p}(x)\bbF_p[x] \bigr)} p^{-(1+j)d_{i,p} } \Bigr).   
        \end{align*}     
    \end{enumerate}
\end{theorem}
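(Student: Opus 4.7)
My plan is to use the method of moments, computing for each finite tuple $(N_p)_{p\in\sP}$ of $R_{p^m,Q}$-modules the mixed surjection moment
\begin{align*}
    \bbE\Bigl[\prod_{p\in\sP}\#\Sur_{R_{p^m,Q}}\bigl(\coker(\bW)_{p^m,Q},\,N_p\bigr)\Bigr],
\end{align*}
and then invoking a moment-determination theorem of the Sawin--Wood type \cite{sawin2022moment} to convert moment convergence into joint distributional convergence. The starting point is a module-theoretic translation of the problem: view $\bbZ^n$ as a $\bbZ[x]$-module via $x\cdot v\de\bX v$, so that $\bW(\bbZ^n)=\bbZ[\bX]\cdot b$ and $\coker(\bW)_{p^m,Q}\cong M/R_{p^m,Q}\cdot[b]$ with $M\de\bbZ^n/(p^m\bbZ^n+Q(\bX)\bbZ^n)$. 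An $R_{p^m,Q}$-linear map $\coker(\bW)_{p^m,Q}\to N_p$ is then the same datum as a $\bbZ$-linear map $\phi:\bbZ^n\to N_p$ satisfying $\phi(b)=0$ and the linearity identity $\phi\circ\bX=x\cdot\phi$; the constraints $p^mN_p=Q(x)N_p=0$ absorb all other relations automatically.

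Conditioning on the $\bbZ$-linear data $\phi=(\phi_p)_{p\in\sP}$ first, the moment rewrites as a sum
\begin{align*}
    \sum_{\phi:\bbZ^n\to\prod_pN_p}\bb1\bigl[\phi(b)=0\bigr]\,\bb1\bigl[\phi_p\text{ surjective for each }p\bigr]\cdot\bbP\bigl(\phi\circ\bX=x\cdot\phi\bigr).
\end{align*}
Independence of the entries of $\bX$ makes the final probability factor across the $n$ columns of $\bX$ into independent events of the form $\bbP\bigl(\sum_i\bX_{i,j}\phi(e_i)=x\cdot\phi(e_j)\bigr)$. Each such event is a discrete anticoncentration statement on the finite Abelian group $\prod_pN_p$; by character expansion and the $\alpha_n$-balanced hypothesis, each factor is bounded above by $(\#\prod_pN_p)^{-1}$ up to an inflation factor of the form $(1-c\alpha_n)^{-k(\phi,j)}$, where $k(\phi,j)$ counts the coordinates $i$ for which $\phi(e_i)$ escapes a given proper $R_{p^m,Q}$-submodule of $\prod_pN_p$. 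Asymptotic independence across $p\in\sP$ already appears at this stage, since characters of $\prod_pN_p$ factor over $p$ and in the ``generic'' case $(\#\prod_pN_p)^{-1}=\prod_p(\#N_p)^{-1}$ splits multiplicatively.

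The crux of the argument is then a Nguyen--Wood--style code bound \cite{nguyen2020surjectivity,wood2019random}: partition the $\phi$'s by their ``code'' $C(\phi)$, i.e.\ by the proper $R_{p^m,Q}$-submodule in which most columns $\phi(e_i)$ lie, and show that the combinatorial gain from choosing $\phi$ compatible with a smaller code is outweighed by the anticoncentration loss $(1-c\alpha_n)^{\Omega(n)}$. This is precisely where the hypothesis $n\alpha_n/\ln n\to\infty$ enters: it ensures $(1-c\alpha_n)^{n}$ decays faster than any polynomial, dominating the $n^{O(1)}$ entropy from the choice of structured $\phi$. The surviving main term, after assembling and using the decomposition $R_{p^m,Q}\otimes_{\bbZ}\bbF_p\cong\prod_i\bbF_p[x]/(Q_{i,p}(x)^{q_{i,p}})$ along the irreducible factors of $Q\bmod p$, yields asymptotic moments matching those of a product measure $\prod_{p\in\sP}\mu_{p^m,Q}$, with $\mu_{p^m,Q}$ of Cohen--Lenstra type. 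To close, I would identify $\mu_{p^m,Q}$ concretely as (for instance) the law of the cokernel of a uniformly random $R_{p^m,Q}$-endomorphism, and then extract the explicit Hom/Ext/Aut expression of part~(2) by evaluating surjection moments against that law and solving the resulting Euler-product identity, as in \cite{cheong2023cokernel}.

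The main obstacle I anticipate is the structured-code estimate in the walk-matrix setting. Unlike the classical random-matrix situation, the columns of $\bW$ are strongly coupled through the $\bbZ[x]$-action of $\bX$, so existing code lemmas cannot be invoked as a black box; the column-wise factorization of $\bbP\bigl(\phi\circ\bX=x\cdot\phi\bigr)$ above is what rescues the method, but verifying that the weights attached to structured $\phi$'s under this rewriting genuinely satisfy Wood-type code bounds, uniformly across the finite set $\sP$, is delicate. A secondary difficulty is checking the Sawin--Wood hypotheses for the non-local ring $R_{p^m,Q}$, whose reduction mod $p$ has several irreducible factors; handling this cleanly requires working prime-by-prime and then reassembling via the independence across $p\in\sP$ already obtained at the moment level.
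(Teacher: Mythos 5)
Your plan follows essentially the same route as the paper: rewrite the surjection moments as a sum over $F:\bbZ^n\to N$ with $F(b)=0$ and $F\bX=xF$, factor $\bbP(F\bX=xF)$ over the independent columns of $\bX$, split into codes versus structured (robust) maps using the Wood/Nguyen--Wood estimates together with $n\alpha_n/\ln n\to\infty$, and then invert the limiting moments $(\#N)^{-1}$ via the Sawin--Wood category-theoretic moment theorem, with the simple modules identified as $\bbF_p[x]/(Q_{i,p}(x)\bbF_p[x])$. The only cosmetic difference is that the paper extracts the explicit Aut/Hom/Ext formula directly from Sawin--Wood (after passing to the finite quotient ring and using the Chinese remainder theorem across $\sP$) rather than by identifying $\mu_{p^m,Q}$ with the cokernel law of a random endomorphism.
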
 
\newpage
So far as it pertains to the group structure, the conclusion of \Cref{thm: Sparse_MainResult} with $\sP = \{p \}$ is weaker than \Cref{thm: MainResult_Uniform}, but only slightly.
To study $\coker(\bW)_{p^m}$ itself, without the additional quotient by $Q(x)$, it would namely be sufficient to have a tightness condition stating that 
$ 
    \lim_{C\to \infty} \liminf_{n\to \infty} \bbP(\# \coker(\bW)_{p^m} \leq C) = 1. 
$
Such a condition would yield the limiting law as a $\bbZ[x]$-module\footnote{Indeed, note that the tightness condition would imply that $\coker(\bW)_{p^m,Q} \cong \coker(\bW)_{p^m}$ with high probability for $Q(x) \de \prod_{i\in \{1,\ldots,r \}}\prod_{j \in \{1,\ldots,r \}\setminus\{i\}} (t^i-t^j)$ with $r$ sufficiently large because a finite group can only admit finitely many distinct endomorphisms.} and the limiting law as a group then follows by summing over all $\bbZ[x]$-module structures on $\oplus_{i=1}^\ell \bbZ/p^{\lambda_i}\bbZ$.

We are not aware of a direct proof that the aforementioned sum recovers the formula in \Cref{thm: MainResult_Uniform}, but this would follow indirectly since \Cref{thm: Sparse_MainResult} also applies to a matrix with uniform entries as in \Cref{thm: MainResult_Uniform}. 
So, if the tightness condition holds, then the distribution of $\coker(\bW)_{p^m}$ converges to the same limit as in \Cref{thm: MainResult_Uniform} and one has asymptotic independence for any fixed finite set of primes. 
Hence, additionally assuming that the restriction to finite sets of primes can also be removed, we are led to the following: 
\begin{conjecture}\label{conj: OddPrimeWellBehaved}
    For every $n\geq 1$, let $\bX$ be a $\{0,1 \}^{n\times n}$-valued random matrix with independent entries and let $b = e$ be the all-ones vector.
    Assume that there exists a sequence $\alpha_n$ satisfying $\lim_{n\to \infty} n\alpha_n/\ln(n)  = \infty$ such that $\bbP(\bX_{i,j} = 0), \bbP(\bX_{i,j} =1)\leq 1 -\alpha_n$ for each entry. 
    Then, 
    \begin{align} 
        \lim_{n\to \infty}{}&{} \bbP\bigl(\coker(\bW)_{p^2} \in \{0,\bbZ/p\bbZ \}\text{ for every odd prime p}\bigr)=\negsp \negsp \prod_{\text{odd primes }p}\negsp \negsp
        (1+p^{-1})
        \prod_{i=0}^{\infty} 
        (1-p^{-(i+1)}). \nonumber  
    \end{align} 
\end{conjecture}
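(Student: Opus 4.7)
The plan is to follow the roadmap sketched in the excerpt, which reduces the conjecture to two ingredients beyond \Cref{thm: MainResult_Uniform} and \Cref{thm: Sparse_MainResult}: a tightness estimate for $\#\coker(\bW)_{p^m}$ at each odd prime, and a uniform-in-$n$ tail bound that controls contributions from large primes. First I would establish the tightness condition
\begin{equation*}
\lim_{C\to \infty}\liminf_{n\to \infty}\bbP\bigl(\#\coker(\bW)_{p^2}\leq C\bigr) = 1 \qquad \text{for each odd prime }p.
\end{equation*}
This reduces to a bound on the corank of $\bW$ modulo $p$, since if this corank is at most $k$ then $\coker(\bW)_{p^2}$ has at most $p^{2k}$ elements. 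The natural tool is an anti-concentration estimate of Littlewood--Offord type, applied iteratively along the Krylov columns $b, \bX b, \ldots, \bX^{n-1}b$, using that $b\not\equiv 0 \bmod p$ and that every entry of $\bX$ is $\alpha_n$-balanced modulo $p$.

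Given tightness, I would identify the single-prime marginal as follows. Let $Q_r(x)\de \prod_{1\leq i < j \leq r}(x^i - x^j)$. Any finite $\bbZ[x]$-module $N$ with $\#N \leq C$ has a finite set of endomorphisms, so the action of $x$ satisfies $x^i = x^j$ for some $i < j \leq r$ once $r$ is large enough, meaning $Q_r\cdot N = 0$. On the tight event, therefore, $\coker(\bW)_{p^2, Q_r} = \coker(\bW)_{p^2}$. Part (1) of \Cref{thm: Sparse_MainResult} applied to $Q = Q_r$ then pins down the limiting law of $\coker(\bW)_{p^2}$ as a $\bbZ[x]$-module, and summing over $\bbZ[x]$-module structures on a fixed finite Abelian group yields a group-level limit. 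Since the same argument applies to the uniform distribution in \Cref{thm: MainResult_Uniform}, the resulting group-level limits must coincide. Summing the $N = 0$ and $N = \bbZ/p\bbZ$ contributions from the formula in \Cref{thm: MainResult_Uniform} gives
\begin{equation*}
P_p \de \lim_{n\to \infty}\bbP\bigl(\coker(\bW)_{p^2}\in \{0,\bbZ/p\bbZ\}\bigr) = (1-p^{-2})\prod_{i=1}^{\infty}\bigl(1-p^{-(i+1)}\bigr).
\end{equation*}
For a finite set $\sP_K$ of odd primes $\leq K$, the same tightness argument applied jointly together with the asymptotic independence in part (1) of \Cref{thm: Sparse_MainResult} upgrades this to
\begin{equation*}
\lim_{n\to \infty}\bbP\bigl(\forall p\in \sP_K:\coker(\bW)_{p^2}\in \{0,\bbZ/p\bbZ\}\bigr) = \prod_{p\in \sP_K}P_p.
\end{equation*}

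To pass from finite $\sP_K$ to the infinite product over all odd primes, I would finally prove a uniform tail bound
\begin{equation*}
\bbP\bigl(\coker(\bW)_{p^2}\notin \{0,\bbZ/p\bbZ\}\bigr)\leq Cp^{-2}
\end{equation*}
valid for some absolute constant $C$, all sufficiently large $n$, and all odd primes $p$. This would force $\sum_{p > K}\bbP(\coker(\bW)_{p^2}\notin \{0,\bbZ/p\bbZ\})\to 0$ as $K\to \infty$ uniformly in $n$, letting one interchange the limits in $n$ and $K$ and conclude the conjecture.

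The two genuinely hard steps are the tightness estimate and the uniform tail bound; these are precisely the gaps flagged in the excerpt. Tightness is nontrivial because the columns of $\bW$ exhibit strong algebraic dependencies through the Krylov recursion, which obstructs off-the-shelf anti-concentration arguments and likely requires combining column-wise Littlewood--Offord with structural features specific to walk matrices. The tail bound is more demanding still, because the $Cp^{-2}$ decay must be uniform in both $n$ and $p$: qualitatively this matches the limit behaviour $1 - P_p = 2p^{-2} + O(p^{-3})$, but establishing it at finite $n$ uniformly across primes appears to require a delicate control of the $p$-adic valuation of $\det \bW$ that remains robust as $p$ grows.
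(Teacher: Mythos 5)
The statement you are proving is stated in the paper as \Cref{conj: OddPrimeWellBehaved}, i.e.\ as an open conjecture: the paper gives no proof, only the same conditional reduction you describe (tightness at each prime would upgrade \Cref{thm: Sparse_MainResult} to the law of $\coker(\bW)_{p^m}$ itself, matching \Cref{thm: MainResult_Uniform} via the uniform model, plus an unproven removal of the restriction to finite sets of primes). Your proposal reproduces that reduction faithfully --- the $Q_r$ trick, the indirect identification of the group-level limit through the uniform case, and the finite-to-infinite passage over primes are exactly the paper's heuristic --- but it does not close either of the two steps on which everything hinges, and you say so yourself. As it stands this is therefore not a proof of the conjecture; it is a restatement of why the conjecture is plausible.

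Concretely, the gaps are these. First, the tightness condition $\lim_{C\to\infty}\liminf_{n\to\infty}\bbP(\#\coker(\bW)_{p^2}\leq C)=1$ is not established by invoking ``anti-concentration of Littlewood--Offord type applied iteratively along the Krylov columns'': the columns $b,\bX b,\ldots,\bX^{n-1}b$ all reuse the same entries of $\bX$, so after conditioning on earlier columns the later ones are not sums of independent terms, and the off-the-shelf column-wise estimates (e.g.\ those behind the corank bounds for matrices with independent entries) do not apply. This dependence is precisely why the paper abandons entrywise arguments for $\bW$ and works with the $\bbZ[x]$-module moments of $\coker(\bW)$ instead; any genuine proof of tightness must either extend the Markovian-dynamics argument of \Cref{lem: Dynamics} beyond the uniform distribution (which the paper notes is highly non-robust) or produce a new moment-type bound controlling $\bbE[\#\Sur_{\bbZ[x]}(\coker(\bW),N)]$ summed over large $N$. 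Second, the uniform tail bound $\bbP(\coker(\bW)_{p^2}\notin\{0,\bbZ/p\bbZ\})\leq Cp^{-2}$, uniformly in $n$ and in $p$ (with $p$ allowed to grow with $n$), is asserted but not argued; nothing in \Cref{thm: MainResult_Uniform} or \Cref{thm: Sparse_MainResult} gives uniformity in $p$, since both fix the prime (or a finite set $\sP$) before letting $n\to\infty$. Until both of these are proved, the interchange of limits at the end has no justification, and the statement remains exactly as open as the paper leaves it.
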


\begin{remark}\label{rem: Sparsity}
    The restriction that $\alpha_n \gg \ln(n)/n$ in \Cref{thm: Sparse_MainResult}, which limits how sparse the matrices are allowed to be, is close to optimal.
    The conclusion of \Cref{thm: Sparse_MainResult} namely has to fail when $\bX$ has independent entries satisfying $\bbP(\bX_{i,j} = 0) \geq 1 - (1-\varepsilon)\ln(n)/n$ for $\varepsilon>0$.

    Indeed, the coupon collector theorem then implies that $\bX$ has many rows equal to zero so that $\rank(\bX) \leq n-2$ with high probability.
    The latter implies that $\bX(\bbF_p^n) + \bbF_p b \neq \bbF_p^n$.   
    Hence, since $\coker(\bW)_{p,x} \cong \bbZ^n/( \bW(\bbZ^n) + p\bbZ^n + x\bbZ^n)$ is isomorphic to $\bbF_p^n/(\bX(\bbF_p^n) + \bbF_p b)$, it would follow that $\lim_{n\to \infty}\bbP(\coker(\bW)_{p,x} = \{0 \}) = 0$.
    This is incompatible with the conclusion of \Cref{thm: Sparse_MainResult} since $\mu_{p,x}(\{0\}) = \prod_{j=1}^\infty(1-p^{-1 - j}) \neq 0$.      
\end{remark}
\begin{remark}
    The limiting distribution of the $\bbZ[x]$-module $\coker(Q(\bX))$ with $Q(x)$ a fixed polynomial was recently studied by Cheong and Yu \cite{cheong2023cokernel}. 
    Interestingly, the formulas found in \Cref{thm: Sparse_MainResult} and \cite[Theorem 1.3]{cheong2023cokernel} bear a close resemblance, although they are not identical.  
    This resemblance is not entirely suprising since the studied objects can be related. 
    Indeed, note that $\coker(Q(\bX))_{p^m} \cong \bbZ^n / (Q(x) \bbZ^n + p^m\bbZ^n)$ whereas $\coker(\bW)_{p^m,Q}\cong \bbZ^n/(Q(x)\bbZ^n + p^m\bbZ^n + \bbZ[x]b)$.

    One can however not directly recover \Cref{thm: Sparse_MainResult} from \cite{cheong2023cokernel}.
    For one thing, \cite{cheong2023cokernel} only considers the case $\sP = \{p\}$ and does not allow sparse settings where $\alpha_n \to 0$.  
    Further, $\coker(\bW)_{p^m, Q}$ not only depends on the isomorphism class of $\coker(Q(\bX))_{p^m}$ but also on how the reduction of $b$ lies in $\coker(Q(\bX))_{p^m}$, which is information we do not have access to.    
\end{remark}
\subsection{Proof techniques}\label{sec: ProofTechniques}
The proof of \Cref{thm: MainResult_Uniform} relies on an analysis of the sequence of random vectors $(\bX^{t-1}b)_{t=1}^n$, viewed as a stochastic process. 
More precisely, we show in \Cref{cor: Ut_Coker} that the group structure of $\coker(\bW)_{p^m}$ can be computed in terms of the sequence of random variables $0 = U_1 \leq U_2 \leq \ldots \leq U_n \leq \infty$ defined by  
\begin{align} 
    U_t \de \sup\bigl\{j\geq 0: \bX^{t-1}b \in \spn_{\bbZ}(\bX^{i-1}b: 1\leq  i \leq t-1 ) + p^j\bbZ^n \bigr\}.  \label{eq:KnownHen}
\end{align}
Here, given a ring $R$ and $v_1,\ldots,v_t \in R^n$ we write $\spn_{R}(v_1,\ldots,v_t) \de \{\sum_{i=1}^t c_i v_i: c_i \in R \}$. 
The remaining difficulty is then to study the joint law of the $U_j$. 
The distributional assumption in \Cref{thm: MainResult_Uniform} plays an important role for the latter task: the independence and equidistribution of the entries implies that $\bX$ induces a uniform random endomorphism of $(\bbZ/p^m\bbZ)^n$ which can be used to establish Markovian dynamics for $\min\{U_t, m\}$; see \Cref{lem: Dynamics}. 

This proof yields a direct combinatorial interpretation for the formula in \Cref{thm: MainResult_Uniform}.
Given $U_t$, a counting argument implies that the probability that $U_{t+1} \geq U_t + \delta$ is $p^{-\delta(n-t)}$ for any $\delta$ satisfying $U_t + \delta \leq m$. 
Taking $j=n-t$ then explains the factors of the form $p^{-j\delta_j}$ in \Cref{thm: MainResult_Uniform}. 
Factors of the form $1-p^{-j}$ arise when we additionally have to enforce that $U_{t+1} \leq U_t + \delta$.     
A further benefit of the approach is that it can also be used to establish the law of $\coker(\bW)_{p^m}$ for finite $n$; see \Cref{prop: UnifNonasymptotic}. 

The proof of \Cref{thm: Sparse_MainResult} relies on more sophisticated techniques. 
In particular, we employ the \emph{category-theoretic moment method}. 
In classical probability theory, the moment method allows one to establish convergence in distribution of a sequence of $\bbR$-valued random variables $(Y_i)_{i=1}^\infty$ by showing that the moments $\bbE[Y_i^n]$ converge to those of the desired limiting law provided that some mild conditions are satisfied.
Results of Sawin and Wood \cite{sawin2022moment} similarly allow one to establish limiting laws for random algebraic objects such as groups and modules by showing that category-theoretic moments converge.
Here, given a ring $R$ and a deterministic $R$-module $N$, the \emph{$N$-moment}\footnote{An explanation for the terminology \emph{moment} may be found in \cite[Section 3.3]{clancy2015cohen}.} of a random $R$-module $Y$ is given by $\bbE[\#\Sur_{R}(Y,N)]$ where $\Sur_R(Y,N)$ denotes the set of surjective $R$-module morphisms from $Y$ to $N$.

The main challenge is hence to estimate the $N$-moments of the random $\bbZ[x]$-module $\coker(\bW)$. 
To this end, we employ a strategy developed by Wood \cite{wood2017distribution,wood2019random} and Nguyen and Wood \cite{nguyen2022random} for the estimation of moments of random algebraic objects associated with random matrices with $\alpha$-balanced entries.
A key difference between our setting and the one in \cite{wood2017distribution,wood2019random,nguyen2022random} is that we have little control over the joint law of the entries $\bW_{i,j}$ of our matrix of interest since these are nontrivial algebraic combinations of the entries of $\bX$.  
This is why it is essential to view $\coker(\bW)$ as a $\bbZ[x]$-module, not only as a group.
That is, the $\bbZ[x]$-module-theoretic viewpoint allows us to untangle the algebraic dependencies and hence estimate the category-theoretic moments; see \eqref{eq: SurMomentToProbSum} and the subsequent remarks. 

The advantage of the category-theoretic approach is that it is robust, as is demonstrated by the general distributional assumptions in \Cref{thm: Sparse_MainResult}. 
For comparison, the proof approach for \Cref{thm: MainResult_Uniform} is highly non-robust. 
Indeed, as mentioned above, that proof uses that $\bX$ induces a uniform random endomorphism of $(\bbZ/p^m\bbZ)^n$; a property which is only satisfied when $\bX \bmod p^m$ has independent and uniformly distributed entries.
The robustness of the category-theoretic approach makes us hopeful that we will be able to generalize it in future work, although this remains a nontrivial task; see \Cref{sec: FutureWork}.

\subsection{Structure of this paper}
The proof of \Cref{thm: MainResult_Uniform} is given in \Cref{sec: ProofUniform}. 
We there also give a non-asymptotic variant of \Cref{thm: MainResult_Uniform}. 
The proof of \Cref{thm: Sparse_MainResult} is given in \Cref{sec: ProofSparse}.
Directions for future work are outlined in \Cref{sec: FutureWork}
\section{Proof of \texorpdfstring{\Cref{thm: MainResult_Uniform}}{Theorem}}\label{sec: ProofUniform}
Throughout this section, we fix a prime $p$ and a vector $b\in \bbZ^n$ with $b\not\equiv 0 \bmod p$. 
Recall from \Cref{sec: ProofTechniques} that the proof has two main ingredients.
The first ingredient is \Cref{cor: Ut_Coker} which shows that the group structure of $\coker(\bW)_{p^m}$ can be computed in terms of $U_1,\ldots,U_n$. 
The second ingredient is \Cref{lem: Dynamics} which concerns the joint law of the $U_t$ when $\bX$ is random as in \Cref{thm: MainResult_Uniform}.
We combine these results to establish \Cref{thm: MainResult_Uniform} in \Cref{sec: LawCokerpm}. 
\subsection{Computing \texorpdfstring{$\coker(\bW)_{p^m}$}{coker(K)} in terms of \texorpdfstring{$U_1,\ldots,U_n$}{U1,...,Un}}
Recall the definition of $U_t$ from \eqref{eq:KnownHen}. 
Fix an integer $m\geq 1$ and abbreviate $R \de \bbZ/p^m\bbZ$.
The following lemma then produces an $R$-module basis for $R^n$ which is well-adapted to the computation of $\coker(\bW)_{p^m}$.  
\begin{lemma}\label{lem: ModuleBasis}
    Write $\Lambda_t \de \min\{U_t,m \}$.
    Then, there exist $v_1,\ldots,v_n \in R^n$  such that for every $t \leq n$ the following properties are satisfied:  
    \begin{enumerate}[leftmargin = 2em, label = (\roman*)]
        \item \label{item: rank}The reduction of the matrix $(v_1,\ldots,v_t)$ modulo $p$ has rank $t$ over $\bbF_p$. 
        \item \label{item: v_span_X_span}It holds that $ \spn_{R}(\bX^{i-1} b \bmod p^m:1\leq i\leq t) = \spn_{R}(p^{\Lambda_i}v_i: 1 \leq i\leq t)$.        
    \end{enumerate}    
\end{lemma}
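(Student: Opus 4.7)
The plan is to construct $v_1, \ldots, v_n$ by induction on $t$, maintaining (\ref{item: rank}) and (\ref{item: v_span_X_span}) at every step. Setting $M_t \de \spn_R(\bX^{i-1}b \bmod p^m : 1 \leq i \leq t)$, the base case $t = 0$ is vacuous. For the inductive step the natural dichotomy is whether $\bX^{t-1}b$ is congruent to an element of $M_{t-1}$ modulo $p^m$.

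In the easy case $U_t \geq m$, so $\Lambda_t = m$ and $p^{\Lambda_t}v_t = 0$ in $R^n$, condition (\ref{item: v_span_X_span}) reduces to $M_t = M_{t-1}$, which is immediate from $U_t \geq m$; I can then pick $v_t$ to be any vector extending $(v_1, \ldots, v_{t-1})$ to an $\bbF_p$-linearly independent tuple modulo $p$, using only that $t \leq n$. In the other case $U_t < m$, the supremum definition of $U_t$ yields integers $c_1, \ldots, c_{t-1}$ and $w \in \bbZ^n$ with $w \not\equiv 0 \bmod p$ such that $\bX^{t-1}b - \sum_i c_i \bX^{i-1}b = p^{U_t}w$. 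Setting $v_t$ to be the image of $w$ in $R^n$ gives $p^{\Lambda_t}v_t \equiv \bX^{t-1}b - \sum_i c_i \bX^{i-1}b \pmod{p^m}$, which combined with (\ref{item: v_span_X_span}) at $t-1$ immediately yields (\ref{item: v_span_X_span}) at $t$.

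The delicate point is enforcing (\ref{item: rank}) in this second case. A short calculation in any $R$-basis of $R^n$ extending $(v_1, \ldots, v_{t-1})$ shows that (\ref{item: rank}) at $t$ is equivalent to the image of $\bX^{t-1}b$ in $R^n / \spn_R(v_1, \ldots, v_{t-1})$ having $p$-adic valuation \emph{exactly} $\Lambda_t$, rather than strictly more. The previous inductive hypotheses only guarantee the inequality $\geq \Lambda_t$; achieving equality depends on how earlier $v_i$ were chosen, in particular on the ``free'' choices made at previous Case A steps.

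To close the induction I would carry a strengthened hypothesis demanding this valuation equality at every forthcoming Case B step, maintained by selecting $v_t$ judiciously whenever we are in Case A. Since in Case A we freely choose $v_t \bmod p$ in the complement of $\bar U_{t-1} \de \spn_{\bbF_p}(v_i \bmod p : i < t)$, and each ``bad'' choice --- one that would break the valuation equality for some later index $s$ --- forces $v_t \bmod p$ to lie in a proper affine subspace of $\bbF_p^n$, a counting argument in $\bbF_p^n$ yields that at least one good choice exists. The hardest part of the proof will be executing this lookahead argument cleanly. A structurally cleaner alternative is to first identify $M_n \cong \bigoplus_{t : U_t < m} R/p^{m - U_t}R$ via the Smith normal form of $\bW \bmod p^m$ and then construct the basis $(v_t)$ by a filtered analogue of Smith's procedure, peeling off one cyclic summand per step of the filtration $M_0 \subseteq M_1 \subseteq \cdots \subseteq M_n$.
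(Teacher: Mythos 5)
Your construction coincides with the paper's in both cases up to the point you yourself flag as unresolved: in the case $U_t<m$ you do not establish property (i), i.e.\ that $v_t\notin\spn_R(v_1,\ldots,v_{t-1})+pR^n$, and the ``lookahead'' scheme you sketch to repair this is left unexecuted (as is the Smith-normal-form alternative, which would still require relating the invariant factors of $\bW\bmod p^m$ to the $U_t$ --- the heart of the matter). This is a genuine gap, and moreover the repair is aimed at a non-problem. The $U_i$ are nondecreasing (if $\bX^{t-1}b=\sum_{i<t}c_i\bX^{i-1}b+p^{U_t}z$, multiplying by $\bX$ shows $U_{t+1}\geq U_t$), so every index with $U_t\geq m$ comes after every index with $U_t<m$: no ``free'' Case-A choice is ever made before a Case-B step, and there is nothing to look ahead to.

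More importantly, the valuation equality you worry about is automatic from the maximality in \eqref{eq:KnownHen}, with no care needed in the choice of $w$ or $v_t$; this is exactly how the paper closes the induction. Suppose $v_t\in\spn_R(v_1,\ldots,v_{t-1})+pR^n$. Then $p^{U_t}v_t\in\spn_R(p^{U_t}v_i:i<t)+p^{U_t+1}R^n$, and since $U_i\leq U_t<m$ gives $p^{U_t}v_i=p^{U_t-U_i}\,p^{\Lambda_i}v_i$, the induction hypothesis (ii) at step $t-1$ turns this into $p^{U_t}v_t\in\spn_R(\bX^{i-1}b\bmod p^m:i<t)+p^{U_t+1}R^n$. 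Combined with $\bX^{t-1}b\equiv p^{U_t}v_t+r\bmod p^m$ (with $r$ in the span of the earlier iterates) and $U_t+1\leq m$, this lifts to $\bX^{t-1}b\in\spn_{\bbZ}(\bX^{i-1}b:i<t)+p^{U_t+1}\bbZ^n$, contradicting that $U_t$ is the supremum in \eqref{eq:KnownHen}. Inserting this short argument in place of your lookahead would complete your proof along essentially the paper's lines.
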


\begin{proof}
    We proceed by induction on $t$. 
    If $t = 1$, then $U_1 = 0$ so $v_1 \equiv b \bmod p^m$ satisfies both properties. 
    Now suppose that $t>1$ and assume that there exist $v_1,\ldots,v_{t-1}$ such that both properties are satisfied. 
    We prove the existence of some $v_{t}$. 

    First, suppose that $U_{t} \geq m$. 
    Then, the definition \eqref{eq:KnownHen} yields $\spn_{R}(\bX^{i-1} b \bmod p^m:i\leq t) = \spn_{R}(\bX^{i-1} b \bmod p^m:i\leq t-1)$ and $p^{\Lambda_{t}}v = p^{m}v = 0$ for every $v\in R^n$. 
    Consequently, due to the induction hypothesis, both properties are satisfied if we let $v_{t}\in R^n$ be an arbitrary vector which is not in $\spn_{R}(v_1,\ldots,v_{t-1}) + pR^n$. 
    Such a vector exists because $t -1 < n$.
    
    Now suppose that $U_{t} <m$. 
    By definition of $U_{t}$ there then exist $w \in p^{U_{t}}R^n$ and $r \in \spn_{R}(\bX^{i-1}b\bmod p^m: i\leq  t-1)$ such that $\bX^{t-1}b \equiv w + r \bmod p^m$. 
    Pick some $v_{t}\in R^n$ with $p^{U_{t}}v_{t}= w$.   
    Then, due to the induction hypothesis, \cref{item: v_span_X_span} is satisfied and \cref{item: rank} is equivalent to the statement that $v_{t} \not\in \spn_R(v_1,\ldots,v_{t-1}) + pR^n$.  
    The latter statement is true. 
    Indeed, if not, then $w \in \spn_R(p^{U_{t}}v_1,\ldots, p^{U_{t}}v_{t-1})\allowbreak + p^{U_{t} + 1}R^n$. 
    Then, considering that $\spn_R(p^{U_{t}}v_i: i\leq t-1) \subseteq \spn_R(\bX^{i-1}b \bmod p^m:i\leq t-1)$ by \cref{item: v_span_X_span} of the induction hypothesis and the fact that the $U_i$ are nondecreasing, it follows from $\bX^{t-1}b \equiv w + r \bmod p^m$ that $\bX^{t-1}b \in \spn_\bbZ(\bX^{i-1}b :i\leq t-1) + p^{U_{t}+1}\bbZ^n$ contradicting the maximality of $U_{t}$ in \eqref{eq:KnownHen}. 
    This shows that both properties are satisfied. 
\end{proof}

\begin{corollary}\label{cor: Ut_Coker}
    Adopt the notation of \Cref{lem: ModuleBasis}. 
    Then, $\coker(\bW)_{p^m} \cong \oplus_{t=1}^n \bbZ/p^{\Lambda_t}\bbZ$. 
\end{corollary}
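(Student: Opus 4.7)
The plan is to deduce the corollary directly from \Cref{lem: ModuleBasis} applied with $t=n$. First I would unwind the definitions: abbreviating $R \de \bbZ/p^m\bbZ$, the quotient $\coker(\bW)_{p^m} = \bbZ^n/(\bW(\bbZ^n) + p^m\bbZ^n)$ can be identified with
\begin{align*}
    R^n/\spn_R(\bX^{i-1}b \bmod p^m : 1 \leq i \leq n),
\end{align*}
which by property \cref{item: v_span_X_span} of \Cref{lem: ModuleBasis} equals $R^n/\spn_R(p^{\Lambda_i} v_i : 1 \leq i \leq n)$.

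The next step is to observe that $\{v_1, \ldots, v_n\}$ forms a free $R$-basis of $R^n$. Indeed, $R$ is a local ring with residue field $\bbF_p$ and maximal ideal $pR$; property \cref{item: rank} with $t = n$ asserts that the reduction of the $n \times n$ matrix $V = (v_1,\ldots,v_n)$ modulo $p$ is invertible over $\bbF_p$. Hence $\det V$ is a unit in $R$, so $V$ is invertible over $R$ and the $v_i$ constitute a basis. Change of basis via $V$ is an $R$-linear automorphism of $R^n$ that sends $\spn_R(p^{\Lambda_i}v_i : 1 \leq i \leq n)$ to the submodule $\bigoplus_{i=1}^n p^{\Lambda_i} R$, giving
\begin{align*}
    \coker(\bW)_{p^m} \cong \bigoplus_{i=1}^n R/p^{\Lambda_i}R \cong \bigoplus_{i=1}^n \bbZ/p^{\Lambda_i}\bbZ,
\end{align*}
where the last identification uses $0 \leq \Lambda_i \leq m$ so that $R/p^{\Lambda_i}R = (\bbZ/p^m\bbZ)/p^{\Lambda_i}(\bbZ/p^m\bbZ) \cong \bbZ/p^{\Lambda_i}\bbZ$.

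There is essentially no obstacle here since all the work has been done in \Cref{lem: ModuleBasis}; the only point that requires a moment of thought is justifying that the rank condition \cref{item: rank} modulo $p$ upgrades to invertibility over the local ring $R = \bbZ/p^m\bbZ$, which is immediate from the fact that a lift of a unit in a local quotient is a unit. The remainder is a routine change-of-basis computation in a free module.
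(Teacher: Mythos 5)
Your proposal is correct and follows the same route as the paper's proof: identify $\coker(\bW)_{p^m}$ with $R^n/\bW(R^n)$, use property \cref{item: v_span_X_span} to rewrite $\bW(R^n)$ as $\spn_R(p^{\Lambda_i}v_i)$, and use property \cref{item: rank} with $t=n$ to see that the $v_i$ form an $R$-module basis, from which the direct-sum decomposition is immediate. You merely spell out the basis claim (unit determinant over the local ring $R$) and the change-of-basis step, which the paper leaves implicit.
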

\begin{proof}
    The case with $t=n$ in \cref{item: rank} of \Cref{lem: ModuleBasis} implies that the vectors $v_1,\ldots,v_n \in R^n$ determine an $R$-module basis for $R^n$.
    Further, \cref{item: v_span_X_span} yields that $\bW(R^n) = \spn_{R}(p^{\Lambda_1}v_1,\allowbreak\ldots,p^{\Lambda_n}v_n)$.
    The claim is hence immediate since $\coker(\bW)_{p^m} \cong R^n/\bW(R^n)$. 
\end{proof}

\subsection{Markovian dynamics}
Given a matrix $\bM$, abbreviate $\rank_p(\bM)$ for the rank of $\bM \bmod p$ over $\bbF_p$.
Recall that $R = \bbZ/p^m\bbZ$. 
The following lemma provides a partial converse for \Cref{lem: ModuleBasis} in the case $U_t<m$:
\begin{lemma}\label{lem: UiCharacterization}
    Consider some $t\leq n$ and integers $0 = u_1\leq u_2 \leq \ldots \leq u_t < m$. 
    Then, it holds that $U_i = u_i$ for every $i\leq t$ if and only if there exist $v_1,\ldots,v_t \in R^n$ with $\rank_p(v_1,\ldots,v_t) = t$ such that for every $i\leq t$ one has 
    $\spn_{R}(\bX^{j-1} b \bmod p^m:j\leq i) =\spn_{R}(p^{u_j}v_j: j\leq i)$. 
\end{lemma}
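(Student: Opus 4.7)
The plan is to prove the two implications separately. The ``only if'' direction is immediate from \Cref{lem: ModuleBasis}: since each $u_i<m$, the assumption $U_i=u_i$ for $i\leq t$ gives $\Lambda_i=u_i$ in the notation of that lemma, so the first $t$ of its output vectors already satisfy both required conditions.

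For the ``if'' direction I would induct on $i\leq t$, with the trivial base case $i=1$ (where $U_1=0=u_1$). For the inductive step, assume $U_j=u_j$ for every $j<i$ and prove the two inequalities separately. To see $U_i\geq u_i$, I would use the span hypothesis at index $i$ to write $\bX^{i-1}b\equiv \sum_{j\leq i}c_j p^{u_j}v_j\pmod{p^m}$ for some $c_j\in R$. The terms with $j<i$ lie in $\spn_R(\bX^{j-1}b\bmod p^m:j<i)$ by the span hypothesis at index $i-1$, the $j=i$ term lies in $p^{u_i}R^n$, and the resulting congruence lifts to an identity in $\bbZ^n$ harmlessly because $u_i<m$ ensures $p^m\bbZ^n\subseteq p^{u_i}\bbZ^n$.

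The reverse inequality $U_i\leq u_i$ is the substantive step, and I would prove it by contradiction. A Nakayama-type argument shows that because $v_1,\ldots,v_t$ have rank $t$ modulo $p$, they extend to an $R$-basis of $R^n$; in particular the $v_i$-coefficient of any element of $R^n$ in that basis is a well-defined element of $R$. If $U_i\geq u_i+1$, the inductive hypothesis together with the span hypothesis at index $i-1$ place $\bX^{i-1}b\bmod p^m$ inside $\spn_R(p^{u_j}v_j:j<i)+p^{u_i+1}R^n$, and comparing $v_i$-coefficients with the expansion $\sum_{j\leq i}c_j p^{u_j}v_j$ forces $c_i p^{u_i}\in p^{u_i+1}R$, hence $c_i\equiv 0\pmod p$ since $u_i<m$. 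On the other hand, the ``$\supseteq$'' content of the span hypothesis at index $i$ supplies scalars $\alpha_j\in R$ with $p^{u_i}v_i\equiv \sum_{j\leq i}\alpha_j \bX^{j-1}b\pmod{p^m}$; expanding each $\bX^{j-1}b$ via the span hypothesis at index $j$, only the $j=i$ summand contributes to the $v_i$-coefficient on the right, so this coefficient equals $\alpha_i c_i p^{u_i}$ and must match $p^{u_i}$ on the left. This forces $\alpha_i c_i\equiv 1\pmod p$, contradicting $c_i\equiv 0\pmod p$.

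The only real obstacle is the module-theoretic bookkeeping around the $v_i$-coefficient. Once it is recorded that $v_1,\ldots,v_t$ extend to an $R$-basis of $R^n$, the two expansions above unambiguously determine this coefficient in $R$ and the contradiction drops out; everything else is routine induction and lifting from $R^n$ to $\bbZ^n$.
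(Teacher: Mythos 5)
Your proposal is correct and takes essentially the same route as the paper: the forward direction is the same appeal to \Cref{lem: ModuleBasis}, the bound $U_i\geq u_i$ uses the same span containment and lift from $R^n$ to $\bbZ^n$, and your basis-coefficient contradiction for $U_i\leq u_i$ is just an explicit-coordinates rendering of the paper's observation that $p^{u_i}v_i\notin \spn_R(p^{u_j}v_j: j<i)+p^{u_i+1}R^n$, which forces $\bX^{i-1}b\not\in\spn_{\bbZ}(\bX^{j-1}b: j<i)+p^{u_i+1}\bbZ^n$. The only cosmetic difference is the induction on $i$, which your argument never genuinely uses and which the paper dispenses with.
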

\begin{proof}
    If $U_i = u_i$ for every $i\leq t$ then the existence of $v_1,\ldots,v_t$ with the claimed properties follows from \Cref{lem: ModuleBasis}. 
    Conversely, assume that such $v_1,\ldots,v_t$ exist.
    Then, for every $i\leq t$, 
    \begin{align} 
        \spn_R(\bX^{j-1}b \bmod p^m: j\leq i) &= \spn_R(p^{u_j}v_j:j<i) + \spn_R(p^{u_i}v_i) \label{eq:IcyDew}\\ 
        &\subseteq \spn_R(\bX^{j-1}b \bmod p^m: j< i) + p^{u_i}R^n.\nonumber     
    \end{align}
    Considering that $R = \bbZ/p^m\bbZ$ with $m \geq u_i$, it follows that $\bX^{i-1}b \in \spn_\bbZ(\bX^{j-1}b: j< i) + p^{u_i}\bbZ^n$. 
    The definition \eqref{eq:KnownHen} hence yields $U_i \geq u_i$. 
    On the other hand, since $\rank_p(v_1,\ldots,v_t) = t$ and $u_i < m$, we have $p^{u_i}v_i \not\in \spn_R(p^{u_j}v_j:j < i) + p^{u_i + 1}R^n$.
    Hence,    
    \begin{align} 
        \spn_R(p^{u_j}v_j:j<i) + \spn_R(p^{u_i}v_i) 
        &\not\subseteq \spn_R(p^{u_j}v_j:j<i) + p^{u_i + 1}R^n\\ 
        &= \spn_R(\bX^{j-1}b \bmod p^m: j< i) + p^{u_i + 1}R^n.\nonumber 
    \end{align} 
    Given the equality in \eqref{eq:IcyDew} and the fact that $u_i+1 \leq m$, this implies that $\bX^{i-1}b \not\in \spn_{\bbZ}(\bX^{j-1}b:j<i) + p^{u_i+1}\bbZ^n$. 
    This means that $U_i\leq u_i$. 
    Combine the inequalities $U_i \geq u_i$ and $U_i \leq u_i$ to conclude the proof.    
\end{proof}

\begin{lemma}\label{lem: Dynamics}
    Assume that $\bX$ has independent and $\Unif\{0,1,\ldots,p^{m}-1 \}$-distributed entries. 
    Consider some $t\leq n-1$. 
    Then, for every $0= u_1\leq \ldots\leq u_t< m$ and $u_{t+1} \leq m$,     
    \begin{align} 
        \bbP\bigl(U_{t+1} \geq u_{t+1} \mid U_{i} =u_i, \, \forall i \in \{1,\ldots,t\}\bigr)  = p^{-(n-t)(u_{t+1} - u_t)}.\label{eq:ZenBeetle}
    \end{align}
    In particular, if additionally $u_{t+1} < m$, 
    \begin{align} 
        \bbP\bigl(U_{t+1} = u_{t+1} \mid U_{i} =u_i, \, \forall i \in \{1,\ldots,t\}\bigr)  = p^{-(n-t)(u_{t+1} - u_t)} \bigl(1 - p^{-(n-t)} \bigr). \label{eq:FancyBall}
    \end{align}
\end{lemma}
\begin{proof}
    Two ordered sets of vectors $v_1,\ldots,v_t \in R^n$ and $w_1,\ldots,w_t\in R^n$ are said to be \emph{equivalent} if $\spn_{R}(p^{u_j}v_j:j\leq i) = \spn_{R}(p^{u_j}w_j:j\leq i)$ for every $i \leq t$.  
    
    \Cref{lem: UiCharacterization} implies that the event $\{U_i = u_i:\forall i\leq t \}$ can be written as a union of events of the form $\{\spn_{R}(\bX^{j-1} b \bmod p^m:j\leq i) = \spn_{R}(p^{u_j}v_j:j\leq i),\, \forall i\leq t\}$ indexed by vectors $v_1,\ldots,v_t\in R^n$ with $\rank_p(v_1,\ldots,v_t) = t$ and $\spn_R(v_1) = \spn_R(b\bmod p^m)$.
    Two such events are equal if the corresponding sets of vectors are equivalent and mutually exclusive otherwise. 
    Hence, by conditioning on the equivalence class, \eqref{eq:ZenBeetle} follows if we show that for every $v_1,\ldots,v_t \in R^n$ with $\rank_p(v_1,\ldots,v_t) = t$ and $\spn_R(v_1) = \spn_R(b\bmod p^m)$, 
    \begin{align} 
        \bbP\bigl( U_{t+1} \geq u_{t+1} \mid \spn_R(\bX^{j-1}b \bmod p^m: j\leq i) = \spn_{R}({}&{}p^{u_j}v_j:j\leq i)  ,\,  \forall i\leq t \bigr) \label{eq:OddElf}\\ 
        &\ = p^{-(n-t)(u_{t+1}-u_t)}.\nonumber
    \end{align}
    Fix $v_1,\ldots,v_t$ and let $E\de \{\spn_R(\bX^{j-1}b \bmod p^m: j\leq i) = \spn_{R}(p^{u_j}v_j:j\leq i)  ,\,  \forall i\leq t\}$ denote the event in the condition of \eqref{eq:OddElf}.
        
    It follows from the definition that $U_{t+1} \geq u_{t+1}$ if and only if $\bX(\spn_\bbZ(\bX^{i-1}b:i\leq t))\subseteq \spn_\bbZ(\bX^{i-1}b:i\leq t) + p^{u_{t+1}}\bbZ^n$. 
    Hence, conditional on $E$, one has $U_{t+1} \geq u_{t+1}$ if and only if $\bX(p^{u_t}v_t) \in \spn_R(p^{u_j}v_j: j\leq t) +p^{u_{t+1}}R^n$.
    Considering that $\rank_p(v_1,\ldots,v_t)=t$ and that the $u_i$ are nondecreasing, the latter occurs if and only if $\bX(v_t) \in \spn_R(v_1,\ldots,v_t) + p^{u_{t+1} - u_t}R^n$. 
    Hence,   
    \begin{align} 
        \bbP\bigl( U_{t+1} \geq u_{t+1} \mid E\bigr) = \bbP\bigl(\bX(v_t) \in \spn_R(v_1,\ldots,v_t) + p^{u_{t+1} - u_t}R^n \mid E\bigr).\label{eq:HauntedDad}  
    \end{align} 
    The assumption $\spn_R(v_1) = \spn_R(b\bmod p^m)$ implies that the event $\spn_R(\bX^{j-1} b \bmod p^m:j\leq 2) = \spn_R(p^{u_j}v_1:j\leq 2)$ only depends on $\bX(v_1)$.
    Similarly, continuing in an inductive fashion, the event $E$ only depends on $\bX(v_1),\ldots,\bX(v_{t-1})$. 
    Hence, by the law of total probability, 
    \begin{align} 
        \bbP\bigl(\bX(&v_t) \in \spn_R(v_1,\ldots,v_t) + p^{u_{t+1} - u_t}R^n \mid E\bigr)\label{eq:NormalBat} \\ 
        &= \bbE\Bigl[\bbP\bigl(\bX(v_t) \in \spn_R(v_1,\ldots,v_t) + p^{u_{t+1} - u_t}R^n \mid \bX(v_1),\ldots,\bX(v_{t-1})\bigr)\ \big\vert \ E\  \Bigr].\nonumber 
    \end{align}

    Recall that the entries of $\bX$ are independent and $\Unif\{0,1,\ldots,p^{m}-1 \}$-distributed. 
    This implies that $\bX$ induces a uniform random endomorphism of $R^n$. 
    Hence, since it was assumed that $\rank_p(v_1,\ldots,v_t) = t$, it holds that $\bX(v_t)$ has a uniform distribution on $R^n$ and is independent of $\bX(v_1),\ldots,\bX(v_{t-1})$. 
    Consequently, a counting argument yields that    
    \begin{align} 
        \bbP\bigl(\bX(v_t) \in \spn_R(v_1,\ldots,v_t) + p^{u_{t+1} - u_t}R^n \mid \bX(v_1),\ldots,\bX(v_{t-1})\bigr) = p^{-(n - t)(u_{t+1} - u_t)}. \label{eq:VividFog}
    \end{align}
    Combine \eqref{eq:HauntedDad}--\eqref{eq:VividFog} to establish \eqref{eq:OddElf}. 
    This proves \eqref{eq:ZenBeetle}. 
    Further, \eqref{eq:FancyBall} is an immediate consequence of \eqref{eq:ZenBeetle} since $U_{t+1} = u_{t+1}$ if and only if $U_{t+1} \geq u_{t+1}$ and $U_{t+1} < u_{t+1} + 1$.
\end{proof}
\subsection{The law of \texorpdfstring{$\coker(\bW)_{p^m}$}{coker(W)pm}}\label{sec: LawCokerpm}
It now only remains to combine \Cref{cor: Ut_Coker} and \Cref{lem: Dynamics}. 
This allows us to also determine the law of $\coker(\bW)_{p^m}$ when $n$ is finite:
\begin{proposition}\label{prop: UnifNonasymptotic} 
    Fix some $n\geq 1$, let $\bX$ be a $\bbZ^{n\times n}$-valued random matrix with independent $\Unif\{0,1,\ldots,\allowbreak p^{m} - 1\}$-distributed entries and let $b\in \bbZ^{n}$ be a deterministic vector with $b\not\equiv 0 \bmod p$. 
    Fix an integer $0\leq i_0 \leq n-1$. 

    Pick integers $0 = \lambda_1 \leq \lambda_2 \leq \ldots \leq \lambda_n \leq m$ and denote $\delta_i = \lambda_{n-i+1} - \lambda_{n-i}$. 
    Assume that $\lambda_{i} < m$ if and only if $i \leq n-i_0$.  
    Then, as Abelian groups, 
    \begin{align*}
        \bbP\Bigl(\coker(\bW)_{p^m} \cong \bigoplus_{i=1}^n \frac{\bbZ}{p^{\lambda_i} \bbZ}\Bigl)  = \prod_{i=i_0}^{n-2} \Bigl(1-p^{-(i+1)}\Bigr)\prod_{j=1}^{n-1} p^{-j  \delta_j}. 
    \end{align*} 
\end{proposition}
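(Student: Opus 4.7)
The plan is to reduce the event $\{\coker(\bW)_{p^m}\cong \bigoplus_{i=1}^n \bbZ/p^{\lambda_i}\bbZ\}$ to a joint event on $(U_t)_{t=1}^n$ and then compute its probability by iterating \Cref{lem: Dynamics}. The first step is to verify that $(U_t)$ is nondecreasing: if $\bX^{t-2}b = v + p^{U_{t-1}}w$ with $v\in \spn_{\bbZ}(\bX^{i-1}b:i\leq t-2)$, then $\bX^{t-1}b = \bX v + p^{U_{t-1}}\bX w$ with $\bX v\in \spn_{\bbZ}(\bX^{i-1}b:i\leq t-1)$, giving $U_t\geq U_{t-1}$. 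Hence $\Lambda_t = \min\{U_t,m\}$ is nondecreasing as well, and by \Cref{cor: Ut_Coker} combined with the uniqueness of the invariant factor decomposition of a finite Abelian $p$-group, the event of interest is exactly $\{\Lambda_t = \lambda_t: 1\leq t\leq n\}$. Setting $k \de n-i_0$, so that $\lambda_t<m$ iff $t\leq k$, this factors as $\{U_t=\lambda_t:\,t\leq k\}\cap\{U_{k+1}\geq m\}$, where the second condition is vacuous when $k=n$ and, by monotonicity, implies $\Lambda_t=m$ for every $t>k$.

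Next, I would telescope, using that $U_1=0=\lambda_1$ holds deterministically since $b\not\equiv 0\bmod p$:
\begin{align*}
    \bbP\bigl(U_t=\lambda_t,\, t\leq k;\, U_{k+1}\geq m\bigr) = \prod_{t=2}^{k}\bbP\bigl(U_t=\lambda_t \mid U_i=\lambda_i,\,i<t\bigr)\cdot \bbP\bigl(U_{k+1}\geq m\mid U_i=\lambda_i,\,i\leq k\bigr),
\end{align*}
with the last factor omitted when $k=n$. For $2\leq t\leq k$ I would write the conditional as $\bbP(U_t\geq \lambda_t\mid\cdot)-\bbP(U_t\geq\lambda_t+1\mid\cdot)$, which is legitimate since $\lambda_t+1\leq m$, and invoke \Cref{lem: Dynamics} to obtain $p^{-(n-t+1)(\lambda_t-\lambda_{t-1})}\bigl(1-p^{-(n-t+1)}\bigr)$. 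For the final conditional, \Cref{lem: Dynamics} applied with $u_{t+1}=m$ yields $p^{-(n-k)(m-\lambda_k)}$.

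Finally, I would reindex by $j=n-t+1$. The exponents $-(n-t+1)(\lambda_t-\lambda_{t-1})$ from $t=2,\ldots,k$ become $-j\delta_j$ for $j=i_0+1,\ldots,n-1$, and together with $p^{-(n-k)(m-\lambda_k)}=p^{-i_0\delta_{i_0}}$ this produces $\prod_{j=i_0}^{n-1}p^{-j\delta_j}$; since $\delta_j = \lambda_{n-j+1}-\lambda_{n-j} = m-m = 0$ for $j<i_0$, this equals $\prod_{j=1}^{n-1}p^{-j\delta_j}$. The factors $\bigl(1-p^{-(n-t+1)}\bigr)$ reindex to $\prod_{j=i_0+1}^{n-1}(1-p^{-j}) = \prod_{i=i_0}^{n-2}(1-p^{-(i+1)})$, matching the claimed formula.

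The only subtle step is the monotonicity of $(U_t)$ together with the separate treatment of the block $t>k$: \Cref{lem: Dynamics} is stated only when the conditioned values are strictly below $m$, and monotonicity is precisely what allows us to replace the stronger requirement ``$\Lambda_t=m$ for every $t>k$'' by the single condition ``$U_{k+1}\geq m$'' before applying the lemma. The remainder is straightforward index bookkeeping.
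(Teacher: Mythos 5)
Your proposal is correct and follows essentially the same route as the paper's proof: translate the isomorphism event into the event $\{\Lambda_t=\lambda_t,\ \forall t\}$ via \Cref{cor: Ut_Coker}, use monotonicity of $(U_t)$ to reduce the block $\lambda_t=m$ to the single condition $U_{n-i_0+1}\geq m$, then telescope with \Cref{lem: Dynamics} and reindex. The only cosmetic differences are that you spell out the monotonicity of $(U_t)$ and the subtraction step $\bbP(U_t=\lambda_t\mid\cdot)=\bbP(U_t\geq\lambda_t\mid\cdot)-\bbP(U_t\geq\lambda_t+1\mid\cdot)$, which the paper leaves implicit.
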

\begin{proof}
    By \Cref{cor: Ut_Coker} and the assumption that $\lambda_i < m$ for every $i\leq n-i_0$ and $\lambda_{i} = m$ for every $i > n-i_0$,    
    \begin{align} 
        \bbP\bigl(\coker(\bW)_{p^m} \cong {}&{}\oplus_{i=1}^n \bbZ/p^{\lambda_i}\bbZ\bigl) = \bbP\bigl(U_i = \lambda_i,\, \forall i \in \{1,\ldots,n-i_0 \}\bigr)  \label{eq:AvidLeaf} \\ 
        &\quad \times  \bbP\bigl(U_i\geq m,\, \forall i \in \{n-i_0 +1,\ldots,n \} \mid U_i = \lambda_i,\, \forall i\in \{1,\ldots,n-i_0 \}\bigr). \nonumber 
    \end{align} 
    Recall that $\delta_{i} = \lambda_{n-i+1} - \lambda_{n-i}$. 
    Hence, using that $U_1 = 0$ together with \eqref{eq:FancyBall} from \Cref{lem: Dynamics} which is applicable due to the assumption that $\lambda_i < m$ for every $i \leq n-i_0$,
    \begin{align} 
        \bbP\bigl(U_i = \lambda_i,\, \forall  i \in \{1,\ldots,n-i_0 \} \bigr)
        &=\prod_{i=i_0}^{n-2} \bbP(U_{n-i} = U_{n-i-1} + \delta_{i+1} \mid U_j = \lambda_j, \, \forall j< n-i) \nonumber\\ 
        &=\prod_{i=i_0}^{n-2} (1-p^{-(i+1)}) p^{-(i+1)\delta_{i+1}}.  \label{eq:NewWisp}
    \end{align}
    If $i_0 = 0$, then the second probability in \eqref{eq:AvidLeaf} is equal to one since there is no $i$ satisfying $n + 1\leq i \leq n$.
    In this case, the combination of \eqref{eq:AvidLeaf} and \eqref{eq:NewWisp} concludes the proof.

    Now suppose that $i_0 >0$.    
    Then, it holds that $U_i \geq m$ for all $i> n-i_0$ if and only if $U_{n-i_0 + 1} \geq m$. 
    Hence, using \eqref{eq:ZenBeetle} from \Cref{lem: Dynamics} and recalling that $m = \lambda_{n-i_0} + \delta_{i_0}$,  
    \begin{align} 
        \bbP\bigl(U_i\geq m,\, \forall i \in \{n-i_0 +1,\ldots,n \} {}&{}\mid  U_i = \lambda_i,\, \forall i\in \{1,\ldots,n-i_0 \}\bigr)\label{eq:NewSquid}\\
        &= \bbP(U_{n-i_0 +1} \geq m \mid   U_i = \lambda_i,\, \forall i\in \{1,\ldots,n-i_0 \} \bigr)\nonumber\\ 
        &=p^{-i_0\delta_{i_0}}.\nonumber      
    \end{align}    
    Remark $p^{-i_0\delta_{i_0}}=\prod_{i=1}^{i_0} p^{-i \delta_i}$ since the assumption that $\lambda_i =  m  = \lambda_{i+1}$ for all $i>n-i_0$ ensures that $\delta_{i} = 0$ for every $i<i_0$. 
    The combination of \eqref{eq:AvidLeaf}--\eqref{eq:NewSquid} hence concludes the proof. 
\end{proof}

\begin{proof}[Proof of \texorpdfstring{\Cref{thm: MainResult_Uniform}}{Theorem}]
    Let $\tilde{\lambda}_i \de 0$ for $i \in \{1,\ldots, n-\ell \}$ and let $\tilde{\lambda}_{i} \de \lambda_{i -(n-\ell)}$ for $i\geq n-\ell +1$. 
    The result then follows by considering the probability that $\coker(\bW)_{p^m} \cong \oplus_{i=1}^n\bbZ/p^{\tilde{\lambda}_i}\bbZ$ in \Cref{prop: UnifNonasymptotic} and taking the limit $n\to \infty$.      
\end{proof}

\section{Proof of \texorpdfstring{\Cref{thm: Sparse_MainResult}}{Theorem}}\label{sec: ProofSparse}
We establish a more general result than \Cref{thm: Sparse_MainResult} and study the limiting law of the $\bbZ[x]$-module $\coker(\tilde{\bW})$ where $\tilde{\bW} \de (\bX^{j-1}\bB)_{j=1}^n$ is the $n\times nk$ matrix associated to a deterministic $n\times k$ matrix $\bB$ for some fixed $k$. 
For future reference, let us here state all relevant assumptions:
\begin{enumerate}[leftmargin=3em, label = (A\arabic*)]
    \item \label{a: X}For every $n\geq 1$ let $\bX$ be a $\bbZ^{n\times n}$-valued random matrix with independent entries such that each entry is $\alpha_n$-balanced mod $\sP$.
    \item \label{a: B} Fix some $k\geq 1$.
    For every $n\geq k$ let $\bB\in \bbZ^{n\times k}$ be a deterministic matrix such that $\bB \bmod p$ has rank $k$ over $\bbF_p$ for every $p\in \sP$.
    We denote $\tilde{\bW} \de (\bX^{j-1}\bB)_{j=1}^n$ and write $\coker(\tilde{\bW}) \de \bbZ^n/\tilde{\bW}(\bbZ^{nk})$. 
    \item \label{a: alpha_n}Assume that $\lim_{n\to \infty}n\alpha_n/\ln(n)= \infty$.  
\end{enumerate} 
The desired result, describing the limiting distribution of $\coker(\tilde{\bW})_{p^m,Q}$ under these assumptions, is given in \Cref{prop: InvertMomentSawinWood}.  

As was outlined in \Cref{sec: ProofTechniques}, we rely on the category-theoretic moment method. 
The main ingredient required for the proof is correspondingly an estimate on the moments of $\coker(\tilde{\bW})$:  
\begin{proposition}\label{prop: LimitingMoments}
    Adopt assumptions \ref{a: X} to \ref{a: alpha_n}. 
    Then, for every finite $\bbZ[x]$-module $N$ such that all prime divisors of $\#N$ are in $\sP$,   
    \begin{align}
        \lim_{n\to \infty}\bbE[\#\Sur_{\bbZ[x]}(\coker(\tilde{\bW}), N)] = (\# N)^{-k}.\label{eq: NMoment} 
    \end{align}
\end{proposition}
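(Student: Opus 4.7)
The plan is to compute $\bbE[\#\Sur_{\bbZ[x]}(\coker(\tilde{\bW}), N)]$ directly via linearity of expectation, adapting the depth-stratification framework of Wood \cite{wood2019random} and Nguyen--Wood \cite{nguyen2022random} to the $\bbZ[x]$-module category. First I would identify a surjective $\bbZ[x]$-module map $\coker(\tilde{\bW}) \to N$ with a surjective $\bbZ$-linear map $F:\bbZ^n\to N$ satisfying (i) the intertwining identity $F\circ\bX=T_N\circ F$, where $T_N$ is the action of $x$ on $N$, and (ii) the vanishing condition $F\bB=0$. Writing $F_j:=F(e_j)\in N$, condition (i) splits column by column as $\sum_{j=1}^n \bX_{j,i}F_j=T_N F_i$ for each $i\in\{1,\ldots,n\}$. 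Because the columns of $\bX$ are independent, linearity of expectation then yields
\begin{align*}
\bbE[\#\Sur_{\bbZ[x]}(\coker(\tilde{\bW}),N)]\;=\;\sum_{F}\;\prod_{i=1}^n\bbP\Bigl(\sum_{j=1}^n \bX_{j,i}F_j=T_N F_i\Bigr),
\end{align*}
where $F$ ranges over $\bbZ$-linear surjections $\bbZ^n\to N$ with $F\bB=0$.

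Next I would estimate each factor via a Fourier/Hal\'asz-type inequality built on the $\alpha_n$-balanced mod $\sP$ hypothesis. Decomposing $N$ by $p$-primary parts for $p\in\sP$, every factor equals $(\#N)^{-1}$ up to a deviation dominated by $(1-\alpha_n)^{c(F,p)}$, where $c(F,p)$ counts indices $j$ on which $F_j$ is not annihilated by a fixed nontrivial character of $N/pN$. This motivates a depth function $D_p(F)$, defined as the largest $|S|$ such that $(F_j)_{j\notin S}$ fails to generate $N/pN$ as a $\bbZ[x]$-module; the module structure is the natural one forced by the intertwining condition. I would then stratify the sum: for ``shallow'' $F$ with $\max_{p\in\sP}D_p(F)\le D^*$ and $D^*=o(n\alpha_n/\ln n)$, all factors multiply to $(1+o(1))(\#N)^{-n}$, while the count of such $F$'s is $(1+o(1))(\#N)^{n-k}$ since the rank-$k$-mod-$p$ assumption on $\bB$ makes $F\bB=0$ cut out $k$ independent conditions. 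This produces the main term $(\#N)^{n-k}\cdot(\#N)^{-n}=(\#N)^{-k}$, matching the claim. For ``deep'' $F$ with $D_p(F)>D^*$ for some $p$, a union bound over the bad subset $S$ combined with the trivial per-column estimate $\bbP(\sum_j\bX_{j,i}F_j=c)\le 1-\alpha_n$ for $i\in S$ yields a bound of order $\binom{n}{|S|}(1-\alpha_n)^{|S|}(\#N)^{n-|S|}$, whose total tends to zero under $n\alpha_n/\ln n\to\infty$.

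The main obstacle will be the depth-bounding step. In \cite{wood2019random,nguyen2022random} the random matrix of interest itself has $\alpha$-balanced entries by assumption, whereas here $\tilde{\bW}$ has entries that are nontrivial polynomial combinations of entries of $\bX$. It is precisely the $\bbZ[x]$-module viewpoint, rather than the coarser abelian-group viewpoint, that converts the moment into column-wise independent events in the clean variables $\bX_{j,i}$ instead of tangled algebraic constraints in $\tilde{\bW}_{j,i}$. Consequently, the delicate work lies in setting up a depth notion on $\bbZ$-linear maps $F:\bbZ^n\to N$ that simultaneously respects the $T_N$-action, supports the per-column Fourier estimate, and admits a workable inductive bad-subset counting argument valid in the sparse regime $\alpha_n\to 0$; I would model this on the inductive scheme of \cite{nguyen2022random}, with the additional bookkeeping needed to track the $\bbZ[x]$-submodule generated by $(F_j)_{j\notin S}$ rather than merely the subgroup.
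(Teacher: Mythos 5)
Your strategy coincides with the paper's in all essential respects: the identification of $\Sur_{\bbZ[x]}(\coker(\tilde{\bW}),N)$ with surjections $F\colon\bbZ^n\to N$ satisfying $F\bB=0$ and $F\bX=xF$, the column-by-column factorization of $\bbP(F\bX=xF)$ using independence of the columns of $\bX$, the count $(\#N)^{n-k}$ (up to negligible error) of surjections killed by $\bB$ via the rank-$k$ mod $p$ hypothesis, and the split into ``very surjective'' maps handled by the Nguyen--Wood equidistribution estimates versus the remaining maps are precisely the steps in \eqref{eq: SurMomentToProbSum}, \Cref{lem: NumSubFB0}, and \Cref{lem: CodeEstimate}. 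Two points in your sketch would, however, need repair. First, your depth function tracks generation of $N/pN$ as a $\bbZ[x]$-module, but the per-column sums $\sum_j \bX_{j,i}F_j$ are $\bbZ$-linear combinations, so the Fourier/balancedness estimate requires surjectivity of $F$ off small coordinate sets onto $N$ \emph{as a group} (Wood's code condition); the $x$-action enters only through the deterministic right-hand side $xF(e_i)$, so no module-aware depth is needed, and module generation alone would not suffice, since a nontrivial character of $N$ can annihilate the subgroup generated by $(F_j)_{j\notin S}$ even when that set generates $N$ over $\bbZ[x]$.

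Second, and more seriously, the bound you propose for the deep regime, of order $\binom{n}{|S|}(1-\alpha_n)^{|S|}(\#N)^{n-|S|}$, does not tend to zero: the combinatorial factor $(\#N)^{n-|S|}$ counting such maps overwhelms the probability saving $(1-\alpha_n)^{|S|}\approx e^{-\alpha_n|S|}$, especially in the sparse regime where $\alpha_n$ may be as small as order $\ln(n)/n$, and even more generous readings of your estimate leave a factor like $\binom{n}{|S|}(\#N)^{|S|}e^{-\alpha_n|S|}$ that diverges. This is exactly the obstruction the paper flags before \Cref{def: Robust}: the trivial per-column bound cannot beat the entropy of non-codes, which is why one subdivides non-codes into $\delta$-robust morphisms for proper subgroups $H$, works with a maximal chain $H=G_{\ell(d)}\subsetneq\cdots\subsetneq G_0=N$ and the counts $w_j$, proves the refined per-column bound of \Cref{lem: RobustBound}, and then closes the summation by the delicate estimate extracted from the proof of \cite[Theorem 4.12]{nguyen2022random} in \Cref{lem: NotCodeBound}. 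You do point to the inductive scheme of \cite{nguyen2022random} as your model, which is indeed the right tool (and is what the paper ultimately imports), but the explicit union bound as you state it is not the argument that closes this step.
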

We prove \Cref{prop: LimitingMoments} in \Cref{sec: LimitingMoments} and then use a general-purpose result of Sawin and Wood \cite[Lemma 6.3]{sawin2022moment} to solve the associated moment problem in \Cref{sec: SolveMoment}.    
\begin{remark}
    The assumption in \Cref{prop: LimitingMoments} that all prime divisors of $\#N$ are in $\sP$ can not be removed. 
    Indeed, recall that \ref{a: X} and \ref{a: B} only make assumptions regarding $\rank_p(\bB)$ and the balanced nature of the entries of $\bX$ at primes $p \in \sP$.  

    So, for instance, at $p\notin\sP$ it could occur that $\bbP(\bX \equiv 0 \bmod p) = 1$ and $\bB = 0\bmod p$ in which case $\bbE[\# \Sur_{\bbZ[x]}(\coker(\tilde{\bW}), N)] = p^n -1$ with $N = \bbF_p[x]/x\bbF_{p}[x]$.
    In particular, it then holds that $\lim_{n\to \infty}\bbE[\# \Sur_{\bbZ[x]}(\coker(\tilde{\bW}), N)] = \infty$ which is incompatible with the conclusion of \Cref{prop: LimitingMoments}.     
\end{remark}
\begin{remark}
    There is a sense in which $\coker(\tilde{\bW})$ is a fairly natural random algebraic object to study. 
    Note that $\bW(\bbZ^{nk})$ is exactly the $\bbZ[x]$-submodule of $\bbZ^n$ generated by the columns of $\bB$. 
    Hence, introducing formal symbols $e_1,\ldots,e_n$, we have 
    \begin{align} 
        \coker(\tilde{\bW}) \cong \Bigl(e_1,\ldots,e_n: xe_j = \sum_{i=1}^n \bX_{i,j} e_i,\, \sum_{i=1}^n \bB_{i,r} e_i = 0,\, \forall i\leq n,\, \forall r\leq k  \Bigr)  
    \end{align}    
    as $\bbZ[x]$-modules. 
    So, $\coker(\tilde{\bW})$ corresponds to the finitely presented $\bbZ[x]$-module which is found when one considers $n$ generators, imposes a random action for $x$ specified by $\bX$, and imposes $k\geq 1$ additional deterministic constraints specified by $\bB$.  
\end{remark}

\subsection{Computing the limiting \texorpdfstring{$N$}{N}-moments}\label{sec: LimitingMoments}
Let $N$ be a deterministic $\bbZ[x]$-module such that all prime divisors of $\#N$ are in $\sP$.
We may consider $\bX$ as a random element of $\Hom_{\bbZ}(\bbZ^n,\bbZ^n)$, consider $\bB$ as a deterministic element of $\Hom_{\bbZ}(\bbZ^k,\bbZ^n)$, and consider $x$ as inducing an element of $\Hom_{\bbZ}(N,N)$.
Here, note that $\Hom_{\bbZ}(\cdot,\cdot)$ simply returns the set of group morphisms since a $\bbZ$-module and an Abelian group are the same thing. 

Now observe that a morphism of Abelian groups $F:\bbZ^n \to N$ descends to a morphism of $\bbZ[x]$-modules $\overline{F}:\coker(\tilde{\bW}) \to N$ if and only if the compositions of $F$ with $\bB,\bX$, and $x$ satisfy $F\bB = 0$ and $F\bX = xF$.
Moreover, every $\bbZ[x]$-module morphism $\overline{F}:\coker(\tilde{\bW}) \to N$ arises from some unique $F:\bbZ^n\to N$ in this fashion. 
Consequently, since surjectivity is conserved, 
\begin{align}
    \bbE[\# \Sur_{\bbZ[x]}(\coker(\tilde{\bW}), N)] &=
    \sum_{F\in \Sur_{\bbZ}(\bbZ^n,N): F\bB = 0} \bbP(F\bX = xF).\label{eq: SurMomentToProbSum} 
\end{align}
Let us here emphasize that, while \eqref{eq: SurMomentToProbSum} was relatively easy to prove, the simplification which this step offers is significant.
Indeed, observe that the joint law of the entries of $\tilde{\bW}$ is not easy to understand since these entries are nontrivial algebraic combinations of the entries of $\bX$ and $\bB$.  
On the other hand, $F\bX = xF$ is a linear equation in terms of $\bX$ and hence fairly explicit.

The strategy which we use to estimate the $N$-moments from here on is as follows.
We show that there are approximately $(\#N)^{n - k}$ surjections $F:\bbZ^n\to N$ with $F\bB = 0$ in \Cref{sec: NumSurB0}. 
Subsequently, we show that $\bbP(F\bX = xF) \approx (\# N)^{-n}$ for most terms in \eqref{eq: SurMomentToProbSum} in \Cref{sec: CodesEstimate}, and we show that the remaining terms give a negligible contribution in \Cref{sec: NonCodeEstimate}.
Finally, we combine these ingredients to conclude the proof of \Cref{prop: LimitingMoments} in \Cref{sec: CombineEstimates}.  

\subsubsection{Estimate on the number of surjections satisfying \texorpdfstring{$F\bB = 0$}{FB=0}}\label{sec: NumSurB0}
The \emph{exponent} of a finite Abelian group $G$ is the smallest positive integer $\exp(G)\geq 1$ such that $\exp(G)G = 0$. 
Note that $p\mid \exp(G)$ if and only if $p\mid \# G$.  
\begin{lemma}\label{lem: NumSubFB0}
    Let $G$ be a finite Abelian group and let $\bB \in \Hom_{\bbZ}(\bbZ^k,\bbZ^n)$ be such that $\rank_p(\bB) = k$ for every prime divisor $p$ of $\exp(G)$. 
    Then, there exists a constant $C >0$ depending only on $G$ such that for all $n\geq k$ 
    \begin{align}
        \lvert \#\{F\in \Sur_{\bbZ}(\bbZ^n,G): F\bB = 0\} - (\#G)^{n - k} \rvert \leq C \Bigl(\frac{\#G}{2}\Bigr)^{n}.  \nonumber
    \end{align}  
\end{lemma}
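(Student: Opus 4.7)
The plan is to reduce counting surjections to counting homomorphisms via Möbius inversion on the subgroup lattice of $G$, and to compute the homomorphism counts exactly via the Smith normal form of $\bB$.

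The key preliminary identity is
\begin{align*}
\#\{F\in \Hom_\bbZ(\bbZ^n, H): F\bB=0\} = (\#H)^{n-k}
\end{align*}
for every subgroup $H\leq G$. Such an $F$ factors through $\bbZ^n/\bB(\bbZ^k)$. Applying the Smith normal form to $\bB$ produces invariant factors $d_1\mid\cdots\mid d_k$, and the hypothesis $\rank_p(\bB)=k$ for every prime $p\mid \exp(G)$ forces $\gcd(d_i,\exp(G))=1$ for all $i$. Hence $\bbZ^n/\bB(\bbZ^k) \cong \bbZ^{n-k}\oplus \bigoplus_{i=1}^k \bbZ/d_i\bbZ$, and since $\exp(H)\mid \exp(G)$, multiplication by each $d_i$ is a bijection on $H$, so the torsion summands contribute nothing to $\Hom_\bbZ(\cdot, H)$. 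The identity follows.

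Classifying the homomorphisms on the left by their image yields
\begin{align*}
(\#H)^{n-k} = \sum_{K\leq H} \#\{F\in \Sur_\bbZ(\bbZ^n, K): F\bB=0\}
\end{align*}
for every $H\leq G$, and Möbius inversion over the subgroup lattice then gives
\begin{align*}
\#\{F\in \Sur_\bbZ(\bbZ^n, G): F\bB=0\} = (\#G)^{n-k} + \sum_{H<G}\mu(H,G)\,(\#H)^{n-k},
\end{align*}
where $\mu$ denotes the Möbius function of the subgroup lattice of $G$. Every proper subgroup satisfies $\#H\leq \#G/2$, so $(\#H)^{n-k}\leq (\#G/2)^{n-k}\leq (\#G/2)^n$ as $\#G\geq 2$. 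Setting $C\de \sum_{H<G}|\mu(H,G)|$, a quantity depending only on $G$, delivers the claimed bound.

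The main technical subtlety lies in the Smith normal form step; the rest is a classical Möbius-inversion argument. The rank hypothesis on $\bB$ is what makes everything work: without it, a torsion summand of $\bbZ^n/\bB(\bbZ^k)$ whose exponent shares a factor with $\exp(G)$ could inflate the homomorphism count and break the clean closed-form identity on which the estimate rests.
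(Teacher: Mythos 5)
Your proof is correct. The computational core coincides with the paper's: both arguments use the Smith normal form of $\bB$ together with the hypothesis $\rank_p(\bB)=k$ at every prime $p\mid\exp(G)$ to conclude that the invariant factors of $\bB$ are units modulo $\exp(G)$, which yields the exact count $\#\{F\in\Hom_{\bbZ}(\bbZ^n,H):F\bB=0\}=(\#H)^{n-k}$. Where you genuinely diverge is in passing from homomorphisms to surjections: the paper simply bounds the number of non-surjective homomorphisms by $\sum_{H\subsetneq G}(\#H)^n$, i.e.\ by the number of proper subgroups times $(\#G/2)^n$, dropping the constraint $F\bB=0$ altogether, whereas you keep the constraint, apply the hom-count to every subgroup $H\leq G$ (the point where you need $\exp(H)\mid\exp(G)$), and invert exactly over the subgroup lattice via M\"obius inversion. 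Your route is slightly stronger: it gives the exact identity $\#\{F\in\Sur_{\bbZ}(\bbZ^n,G):F\bB=0\}=\sum_{H\leq G}\mu(H,G)(\#H)^{n-k}$, from which the claimed estimate follows at once (indeed with the better exponent $n-k$ in place of $n$); the paper's union bound is marginally shorter but only yields the inequality. Two small points are worth making explicit in your write-up: for nontrivial $G$ the hypothesis $\rank_p(\bB)=k$ at some prime $p$ already forces $\bB$ to have rank $k$ over $\bbQ$, so all invariant factors are nonzero and the decomposition $\bbZ^n/\bB(\bbZ^k)\cong\bbZ^{n-k}\oplus\bigoplus_{i=1}^k\bbZ/d_i\bbZ$ is legitimate; and the step $(\#G/2)^{n-k}\leq(\#G/2)^n$ uses $\#G\geq 2$, which is automatic whenever the error sum is nonempty, the trivial-$G$ case being degenerate.
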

\begin{proof}
    We first argue that we can replace $\Sur_{\bbZ}(\bbZ^n,G)$ by $\Hom_{\bbZ}(\bbZ^n,G)$ up to a negligible error.  
    If a morphism $F:\bbZ^n\to G$ is not surjective then there exists some proper subgroup $H\subsetneq G$ such that $F(\bbZ^n) = H$. 
    Hence, since $\#\Hom_{\bbZ}(\bbZ^n,H) = (\# H)^n$,   
    \begin{align}
        \#(\Hom_{\bbZ}(\bbZ^n,G) \setminus \Sur_{\bbZ}(\bbZ^n,G)) &\leq \sum_{H \subsetneq G} \# \Hom_{\bbZ}(\bbZ^n,H) = \sum_{H \subsetneq G} (\# H)^{n}. 
    \end{align}   
    Denote $S_G$ for the number of proper subgroups of $G$. 
    Then, since $\#H \leq \# G/2$ by $H$ being a proper subgroup,
    \begin{align}
        \lvert \#\{F\in \Hom_{\bbZ}(\bbZ^n,G)&: F\bB = 0\} - \#\{F\in \Sur_{\bbZ}(\bbZ^n,G): F\bB = 0\} \rvert \label{eq: HomToSur} \\ 
        & \leq  \#(\Hom_{\bbZ}(\bbZ^n,G) \setminus \Sur_{\bbZ}(\bbZ^n,G))  \leq S_G \Bigl(\frac{\#G}{2}\Bigr)^{n}. \nonumber 
    \end{align}
    We next argue that $\#\{F\in \Hom_{\bbZ}(\bbZ^n,G): F\bB = 0\} = (\#G)^{n-k}$. 
    (This is not immediate because the columns $b_1,\ldots,b_k$ of $\bB$ are not necessarily part of a $\bbZ$-module basis for $\bbZ^n$.)

    Let $\bB = \bU \bD \bV$ be the Smith normal form of $\bB$. 
    This means that $\bU \in \bbZ^{n\times n}$ and $\bV \in \bbZ^{k\times k}$ are matrices with $\det(\bU), \det(\bV)\in \{-1,1\}$ and $\bD$ is an $n\times k$ diagonal matrix with integer diagonal entries satisfying $d_1 \mid d_2 \mid \ldots \mid d_k$.  
    For brevity, denote $a \de \exp(G)$.
    For every $p\mid a$, the assumption that $\rank_p(\bB) = k$ implies that $d_i \not\equiv 0 \bmod p$.
    It follows that the $d_i$ are multiplicative units for $\bbZ/a\bbZ$.
    Hence, the matrix $\bD' \de \diag(d_1,\ldots,d_k)$ is invertible in $(\bbZ/a\bbZ)^{k\times k}$.  
    Further, note that $\bU$ and $\bV$ are invertible over $\bbZ$.  
    Hence, if $u_1,\ldots,u_n$ are the columns of $\bU$ then the reductions to $(\bbZ/a\bbZ)^n$ determine a $(\bbZ/a\bbZ)$-module basis, and the reduction of $\bD'\bV$ to $(\bbZ/a\bbZ)^{k\times k}$ is invertible.
    Consequently, since $\bB =  (u_1,\ldots,u_k)\bD'\bV$, the reductions of $b_1,\ldots,b_k$ together with the reductions of the $u_i$ with $i\geq k+1$ determine a $(\bbZ/a\bbZ)$-module basis for $(\bbZ/a\bbZ)^n$. 

    Denote $\pi:\bbZ^n \to (\bbZ/a\bbZ)^n$ for the reduction map. 
    Then, since $a = \exp(G)$, it holds for every $F\in\Hom_{\bbZ}(\bbZ^n,G)$ that there is some unique $\overline{F}\in \Hom_{\bbZ/a\bbZ}((\bbZ/a\bbZ)^n, G)$ with $F = \overline{F} \circ \pi$.   
    Recall that for any ring $R$ an $R$-module morphism from a free $R$-module to an arbitrary $R$-module may be specified uniquely by arbitrarily specifying the images of the basis elements. 
    Consequently, since the $\pi(b_i)$ are part of a $(\bbZ/a\bbZ)$-module basis, 
    \begin{align}
        \#\{F\in \Hom_{\bbZ}(\bbZ^n,G): F\bB = 0\}  &= \#\{\overline{F}\in \Hom_{\bbZ/a\bbZ}((\bbZ/a\bbZ)^n,G): \overline{F}\circ \pi \circ \bB = 0\} \nonumber \\ 
        & = (\# G)^{n-k}.  \label{eq: HomFB0}
    \end{align}
    Combine \eqref{eq: HomToSur} with \eqref{eq: HomFB0} and set $C \de S_G$ to conclude the proof. 
\end{proof} 
We next estimate $\bbP(F\bX = xF)$. 
The quality of the estimates will be better when $F$ is ``very surjective''.
To make this precise, we rely on a notion of \emph{codes} which is due to Wood \cite{wood2017distribution} and a notion of \emph{robust morphisms} which is due to Nguyen and Wood \cite{nguyen2022random}. 

\subsubsection{Estimate for codes}\label{sec: CodesEstimate}
Let $e_1,\ldots,e_n \in \bbZ^n$ be the standard basis vectors. 
For any $\sigma \subseteq \{1,\ldots,n \}$ write $V_{\sigma}\de \spn_{\bbZ}(e_i : i \in \sigma)$ for the $\bbZ$-submodule of $\bbZ^n$ consisting of vectors whose nonzero coordinates are in $\sigma$. 
We abbreviate $V_{\setminus \sigma} \de V_{\{1,\ldots,n \}\setminus \sigma}$.   
\begin{definition}\label{def: Code}
    Let $G$ be an Abelian group. 
    Then, $F\in \Hom_{\bbZ}(\bbZ^n,G)$ is called a \emph{code of distance $w\geq 1$} if for every $\sigma \subseteq \{1,\ldots,n \}$ with $\# \sigma <w$ one has $F(V_{\setminus \sigma}) = G$.   
\end{definition}
The foregoing definition may also be applied to $\bbZ[x]$-modules since these can be viewed as Abelian groups through the $\bbZ$-module structure.    
\begin{lemma}\label{lem: CodeEstimate}
    Adopt assumptions \ref{a: X} and \ref{a: alpha_n} and fix a $\bbZ[x]$-module $N$ such that all prime divisors of $\#N$ are in $\sP$.
    Then, for every $\delta >0$ there exist constants $C,c>0$ such that for all $n\geq 1$ 
    \begin{align}
        \sum_{\substack{F\in \Hom_{\bbZ}(\bbZ^n,N)\\F\text{ a code of distance }\delta n}} \bigl\lvert \bbP(F\bX = xF) - (\#N)^{-n}\bigr\rvert \leq  C n^{-c}. \label{eq: CodeEstimate} 
    \end{align} 
\end{lemma}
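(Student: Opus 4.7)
My plan is to carry out a Fourier-analytic computation on the finite Abelian group underlying $N$, using the independence of the columns of $\bX$ to decouple the equation $F\bX = xF$ and the code property to control the resulting error. Writing $f_i \de F(e_i) \in N$, the equation $F\bX = xF$ is equivalent to the $n$ column-wise identities $\sum_{i=1}^n \bX_{i,j} f_i = xf_j$ for $j \in \{1,\ldots,n\}$. Since the entries of $\bX$ are independent, so are these conditions across different $j$, yielding
\begin{align*}
    \bbP(F\bX = xF) = \prod_{j=1}^n \bbP\bigl(\textstyle\sum_{i=1}^n \bX_{i,j} f_i = xf_j\bigr).
\end{align*}
Fourier inversion on $N$ then expresses each factor as
\begin{align*}
    \bbP\bigl(\textstyle\sum_{i=1}^n \bX_{i,j} f_i = xf_j\bigr) = \frac{1}{\#N}\sum_{\chi \in \hat N}\chi(-xf_j)\prod_{i=1}^n \bbE\bigl[\chi(f_i)^{\bX_{i,j}}\bigr],
\end{align*}
with the trivial character $\chi = 1$ contributing exactly $1/\#N$.

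The central step is to bound the contribution of the nontrivial characters. For such a $\chi$, the hypothesis that $F$ is a code of distance $\delta n$, applied to the proper subgroup $\ker \chi \subsetneq N$, yields $\#\{i : \chi(f_i) \neq 1\} \geq \delta n$. I then claim $|\bbE[\chi(f_i)^{\bX_{i,j}}]| \leq 1 - c_N \alpha_n$ for each such $i$, with a positive constant $c_N$ depending only on $N$. Indeed, the order $d$ of $\zeta \de \chi(f_i)$ divides $\exp(N)$ and hence has all prime divisors in $\sP$; choosing such a prime $p$, the probabilities $q_r \de \bbP(\bX_{i,j} \equiv r \bmod d)$ satisfy $q_r \leq \bbP(\bX_{i,j} \equiv r \bmod p) \leq 1 - \alpha_n$, and expanding $|\sum_r q_r \zeta^r|^2 = 1 - \sum_{r\ne s}q_rq_s(1-\cos(2\pi(r-s)/d))$ together with the bound $1-\cos(2\pi(r-s)/d) \geq 1 - \cos(2\pi/d) > 0$ gives $|\bbE[\zeta^{\bX_{i,j}}]|^2 \leq 1 - (1-\cos(2\pi/d))\alpha_n$. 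Setting $c_N \de \tfrac{1}{2}(1 - \cos(2\pi/\exp N))$ and multiplying over the $\geq \delta n$ such indices then yields $|\prod_i \bbE[\chi(f_i)^{\bX_{i,j}}]| \leq (1 - c_N \alpha_n)^{\delta n}$ uniformly over nontrivial $\chi$.

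Combining the bounds, each per-column probability satisfies $p_j = (\#N)^{-1}(1 + \eta_j)$ with $|\eta_j| \leq (\#N - 1)(1 - c_N \alpha_n)^{\delta n}$, and a telescoping estimate gives $|\bbP(F\bX = xF) - (\#N)^{-n}| \leq (\#N)^{-n}(\prod_j(1+|\eta_j|) - 1)$. The hypothesis $n\alpha_n/\ln n \to \infty$ ensures that $(1 - c_N\alpha_n)^{\delta n} \leq \exp(-c_N\delta \cdot n \alpha_n)$ is smaller than any fixed negative power of $n$ for $n$ large enough, so the right-hand side is at most $(\#N)^{-n} \cdot n^{-A}$ for any fixed $A > 0$. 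Summing over at most $\#\Hom_{\bbZ}(\bbZ^n, N) = (\#N)^n$ codes $F$ then yields a bound of $n^{-A}$, and taking $A = c+1$ finishes the argument. The main technical hurdle is the per-character Fourier bound in the second paragraph: relating a nontrivial character of the arbitrary finite Abelian group $N$ to the prime-by-prime $\alpha$-balancedness hypothesis, which is precisely where the assumption that all prime divisors of $\#N$ lie in $\sP$ is essential.
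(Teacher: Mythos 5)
Your argument is correct, but it takes a genuinely different route from the paper's. The paper handles the per-code estimate by citation: it invokes \cite[Lemma 4.7]{nguyen2022random} to get $\lvert \bbP(F\bX = A) - (\#N)^{-n}\rvert \leq C(\#N)^{-n}n^{-c}$ uniformly over codes $F$ and deterministic targets $A$ (remarking that the proof, traced through \cite[Lemma 2.1]{wood2019random}, only needs balancedness at primes dividing $\#N$), and then multiplies by the trivial count $\#\Hom_{\bbZ}(\bbZ^n,N) = (\#N)^n$ of summands --- the same final step you take. You instead reprove that estimate from scratch by Fourier analysis on $N$: the column-wise factorization of $\bbP(F\bX = xF)$ using independence of the columns, the observation that a nontrivial character $\chi$ must have $\chi(F(e_i))\neq 1$ for at least $\delta n$ indices (by applying the code property to the proper subgroup $\ker\chi$), and the bound $\lvert \bbE[\zeta^{\bX_{i,j}}]\rvert \leq 1-c_N\alpha_n$ obtained from balancedness at a prime of $\sP$ dividing the order of $\zeta$ are all sound, and the hypothesis $n\alpha_n/\ln n \to \infty$ enters exactly where it should. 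What your route buys is self-containedness and full transparency about why balancedness only at the primes in $\sP$ suffices (a point the paper must argue by inspecting the cited proofs); what it costs is re-deriving a known lemma. Two cosmetic repairs: $\zeta = \chi(f_i)$ is a primitive $d$-th root of unity but not necessarily $\euler^{2\pi\i/d}$, so the cosine step should be stated as $1-\Re(\zeta^{r-s}) \geq 1-\cos(2\pi/d)$ for $r\not\equiv s \bmod d$ (which is what your estimate actually uses); and since your final bound $n^{-A}$ is only valid for $n$ large, you should enlarge $C$ to absorb the finitely many small $n$ (each summand is at most $1$ and there are at most $(\#N)^n$ of them), as the statement requires the inequality for all $n\geq 1$.
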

\begin{proof}
    For any code $F$ and any $A\in \Hom_{\bbZ}(\bbZ^n,N)$ it is shown in \cite[Lemma 4.7]{nguyen2022random} that $\lvert \bbP(F\bX = A) -(\# N)^{-n}
    \rvert \leq C (\# N)^{-n} n^{-c}$.
    Actually, strictly speaking, \cite[Lemma 4.7]{nguyen2022random} is stated for matrices which are $\alpha_n$-balanced at all primes, but only balancedness at primes dividing $\#N$ is necessary for its proof. 
    (Indeed, \cite[Lemma 4.7]{nguyen2022random} follows from \cite[Lemma 4.5]{nguyen2022random} whose proof may be found in \cite[Lemma 2.1]{wood2019random} and only requires the weaker condition; see \cite[Definition 1]{wood2019random}.) 
    The result now follows since there are at most $\#\Hom_{\bbZ}(\bbZ^n,N) = (\#N)^n$ summands in \eqref{eq: CodeEstimate}.   
\end{proof}

\subsubsection{Estimate for non-codes}\label{sec: NonCodeEstimate}
The contribution of terms in \eqref{eq: SurMomentToProbSum} corresponding to non-codes turns out to be negligible.
It is however delicate to make this rigorous. 
The estimate which can be achieved on $\bbP(F\bX = xF)$ for a generic non-code $F$ is namely insufficient to beat the combinatorial factor corresponding to the number of non-codes. 
Hence, a subdivision of the non-codes is required to balance the quality of the estimates against the combinatorial costs. 

For an integer $d$ with prime factorization $d = \prod_{i} p_i^{e_i}$ denote $\ell(d) \de \sum_{i} e_i$.
Given a subgroup $H\subseteq G$, let $[G:H]\de \#G/\#H$ denote the index of $H$ in $G$.
\begin{definition}\label{def: Robust}
    Let $G$ be a finite Abelian group and let $\delta >0$ be a scalar. 
    Then, $F\in \Hom_{\bbZ}(\bbZ^n,G)$ is called \emph{$\delta$-robust for a subgroup $H\subseteq G$} if $H$ is minimal with the property that
    \begin{align}
        \#\bigl\{i \in \{1,\ldots,n \}: F(e_i) \notin H \bigr\} \leq \ell([G:H]) \delta n\label{eq: Def_Robust}
    \end{align} 
    That is, $H$ satisfies \eqref{eq: Def_Robust} and no strict subgroup $H' \subsetneq H$ satisfies \eqref{eq: Def_Robust}.    
\end{definition} 
The main motivation for \Cref{def: Robust} is the following property: if $F$ is $\delta$-robust for $H$ then the restriction of $F$ to $V_{\sigma}$ with $\sigma \de \{i: F(e_i) \in H \}$ is a code of distance $\delta n$ when $H$ is viewed as the codomain of this restriction.
Indeed, suppose this were not the case. 
Then, there exists $\mu \subseteq \sigma$ with $\# \mu < \delta n$ such that $H' \de F(V_{\sigma\setminus \mu})$ is a strict subgroup of $H$.
So, since $[G:H'] \geq [G:H] +1$,  
\begin{align}
    \#\{i\in \{1,\ldots,n \}: F(e_i) \not\in H' \} &
    \leq \#\{i\in \{1,\ldots,n \}:F(e_i)\not\in H \} + \#\mu\label{eq: RobustImpliesCode}\\
    \leq \ell([G:H'])\delta n\nonumber 
\end{align}
contradicting the minimality of $H$. 

In particular, any $F\in \Hom_{\bbZ}(\bbZ^n,G)$ which is not a code of distance $\delta n$ is not $\delta$-robust for $G$. 
However, \eqref{eq: Def_Robust} is always satisfied when $G = H$. 
This implies that any non-code has to be $\delta$-robust for some, not necessarily unique, proper subgroup of $G$. 
Hence, 
\begin{align}
    \{F\in \Sur_{\bbZ}(\bbZ^n,G):F &\text{ not a code of distance }\delta n \}\label{eq: NotACodeRobust} \\ 
    & \subseteq \bigcup_{H \subsetneq G}  \{F\in \Sur_{\bbZ}(\bbZ^n,G):F \text{ is }\delta\text{-robust for }H\}.\nonumber 
\end{align}
We next establish an estimate on $\bbP(F\bX = xF)$ when $F$ is $\delta$-robust for some $H$. 
The following lemma generalizes \cite[Lemma 4.11]{nguyen2022random} which concerns a similar bound for $\bbP(F(Y) = 0)$.
\begin{lemma}\label{lem: RobustBound}
    Fix scalars $\delta,\alpha >0$, an integer $n \geq 1$, and a finite Abelian group $G$. 
    Fix a proper subgroup $H\subsetneq G$, denote $d \de [G:H]$, and consider a maximal chain of proper subgroups 
    \begin{align}
        H = G_{\ell(d)} \subsetneq \cdots \subsetneq G_2 \subsetneq G_1 \subsetneq G_0 = G.\label{eq: HChain}
    \end{align}   
    Consider a $\delta$-robust morphism $F\in \Hom_{\bbZ}(\bbZ^n,G)$ for $H$.
    For every $1 \leq j \leq \ell(d)$ denote $p_j \de [G_{j-1} : G_{j}]$ and 
    \begin{align}
        w_j \de \#\bigl\{i\in \{1,\ldots,n \}: F(e_i) \in G_{j-1}\setminus G_j \bigr\}. \label{eq: Def_wj}
    \end{align}
    Abbreviate $a \de \exp(G)$ for the exponent of $G$.
    Then, for every $g\in G$ and every $\bbZ^n$-valued random vector $Y$ whose entries are independent and $\alpha$-balanced modulo all prime divisors of $a$,
    \begin{align}
        \bbP\bigl(F(Y) = g \bigr) \leq  &\Bigl( (\#G)^{-1}d + \exp(-\alpha \delta n/a^2) \Bigr) \label{eq: RobustBound}  \prod_{j=1}^{\ell(d)}\Bigl(p_j^{-1} + \frac{p_j -1}{p_j}\exp(-\alpha w_j/p_j^2) \Bigr).
    \end{align}  
\end{lemma}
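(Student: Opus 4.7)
The plan is to split $Y$ according to which coordinates feed into $H$ under $F$, using the $\delta$-robust structure, and to bound the two resulting contributions separately. Set $\sigma \de \{i : F(e_i) \in H\}$ and decompose $Y = Y_\sigma + Y_{\bar\sigma}$, where the two pieces are supported on $V_\sigma$ and $V_{\setminus \sigma}$ respectively. Independence of the entries of $Y$ gives
\begin{align*}
\bbP(F(Y) = g) = \sum_{h\in H}\bbP(F(Y_\sigma) = h)\,\bbP(F(Y_{\bar\sigma}) = g - h).
\end{align*}
By the argument in \eqref{eq: RobustImpliesCode}, $\delta$-robustness of $F$ for $H$ implies that $F|_{V_\sigma}:V_\sigma \to H$ is a code of distance $\delta n$ with codomain $H$. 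I would then apply the standard code estimate (as in \cite[Lemma 4.7]{nguyen2022random}, with $H$ in the role of the codomain) to obtain $\bbP(F(Y_\sigma) = h) \leq 1/\#H + \exp(-\alpha\delta n/a^2)$. Since $1/\#H = d/\#G$, factoring out this uniform bound and summing over $h$ leaves the factor $\bbP(F(Y_{\bar\sigma}) \in g + H)$, so it only remains to bound this probability by $\prod_j\bigl(p_j^{-1} + \tfrac{p_j-1}{p_j}\exp(-\alpha w_j/p_j^2)\bigr)$.

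For that second factor I would iterate down the chain \eqref{eq: HChain}. Partition $\bar\sigma = \bigsqcup_{j=1}^{\ell(d)} I_j$ where $I_j \de \{i: F(e_i) \in G_{j-1}\setminus G_j\}$, so that $\#I_j = w_j$. Denote $\pi_k: G\to G/G_k$ the projection and set $X_k \de \pi_k(F(Y_{\bar\sigma}))$. Since $\pi_k(F(e_i)) = 0$ whenever $F(e_i) \in G_k$, only contributions from $i\in I_j$ with $j\leq k$ survive, so one can write $X_k = \tilde X_{k-1} + W_k$ with $\tilde X_{k-1} \de \sum_{j < k}\sum_{i\in I_j}Y_i\pi_k(F(e_i))$ and $W_k \de \sum_{i\in I_k} Y_i\pi_k(F(e_i))$. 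The key facts are that $\tilde X_{k-1}$ is a lift of $X_{k-1}$ under the natural quotient $G/G_k \to G/G_{k-1}$, that $W_k$ lies in the kernel $G_{k-1}/G_k \cong \bbZ/p_k\bbZ$ (since $\pi_{k-1}(F(e_i))=0$ for $i\in I_k$), and that $\tilde X_{k-1}$ and $W_k$ are independent because they depend on disjoint blocks of coordinates of $Y$.

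Each summand in $W_k$ is $Y_i$ times a nonzero element of $\bbZ/p_k\bbZ$; the $\alpha$-balancedness of $Y_i$ modulo $p_k$ (available since $p_k$ divides $a$) together with the standard small-ball estimate in cyclic groups of prime order (in the spirit of \cite[Lemma 2.1]{wood2019random}) therefore yields the uniform bound $\sup_z\bbP(W_k = z) \leq p_k^{-1} + \tfrac{p_k-1}{p_k}\exp(-\alpha w_k/p_k^2)$. Applying this bound while conditioning on $\tilde X_{k-1}$ gives the same estimate for $\bbP(X_k = g\bmod G_k \mid X_{k-1} = g\bmod G_{k-1})$. Because $\{X_{\ell(d)} = g\bmod H\}$ implies $X_k = g\bmod G_k$ for every $k$ by compatibility of the tower, the chain rule of conditional probability produces the desired product bound on $\bbP(F(Y_{\bar\sigma})\in g+H)$, and combining with the first factor yields \eqref{eq: RobustBound}.

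The main obstacle is engineering the tower $(X_k)_k$ so that the increments $W_k$ genuinely land in the successive prime-order quotients $G_{k-1}/G_k$ and are independent of the past; once that independence structure is secured, the rest reduces to a routine application of the Esseen-type bound in $\bbZ/p_k\bbZ$ together with the standard code lemma. A subtler conceptual point is that the argument uses $\alpha$-balancedness at \emph{each} prime $p_k$ appearing as a composition factor of $G$, rather than at a single ambient modulus, which is exactly what makes the prime-specific exponents $\alpha w_k/p_k^2$ appear in the final bound.
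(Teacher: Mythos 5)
Your proposal is correct and is essentially the paper's own argument: your tower events $X_k=\pi_k(g)$ coincide with the paper's conditioning events $\sum_{i\in\Sigma_r}y_iF(e_i)-g\in G_r$, and both proofs rest on the same two uses of the code estimate of \cite[Lemma 4.5]{nguyen2022random} (once per prime-order quotient $G_{k-1}/G_k$, once for the distance-$\delta n$ code into $H$ obtained from robustness). The only cosmetic difference is that you split off the $H$-part by a convolution over $H$ at the start, whereas the paper handles it as the final conditional factor.
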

\begin{proof}
    The strategy in this proof is to reduce to the case of codes where estimates are available from \cite[Lemma 4.5]{nguyen2022random}.  
    Again, similar to the remarks in the proof of \Cref{lem: CodeEstimate}, that lemma is stated for the case where $Y$ is balanced at all primes, but the case where $Y$ is merely balanced at prime divisors of $a$ follows from its proof.  

    For every $j\in \{1,2,\ldots,\ell(d)\}$ define a set of indices by 
    \begin{align}
        \sigma_j\de \bigl\{i\in \{1,\ldots,n \}: F(e_i) \in G_{j-1}\setminus G_j\bigr\}. \label{eq: Def_sigmaj}
    \end{align}
    Then, for every $r \leq \ell(d)$ the set of indices $i$ with $F(e_i)\notin G_r$ is given by $\Sigma_r \de \cup_{j=1}^r \sigma_j$.   
    Write $Y = (y_1,\ldots,y_n)$ and observe that $\sum_{i\notin \Sigma_r} y_i F(e_i) \in G_r$ for any $r \leq \ell(d)$. 
    Consequently, since $F(Y) = \sum_{i=1}^n y_i F(e_i)$, it is only possible to have $F(Y) = g$ if $\sum_{i\in \Sigma_r}y_i F(e_i) - g \in G_r$ for all $r \leq \ell(d)$.
    Hence, by definition of conditional probability,
    \begin{align}
        \bbP\Bigl(F(Y) = g\Bigr)&= \bbP\Bigl(\sum_{i\in \Sigma_1} y_i F(y_i) - g \in G_1\Bigr)\bbP\Bigl(F(Y) = g\,\Big\vert\, \sum_{i\in \Sigma_1} y_i F(y_i) - g \in G_1 \Bigr)\nonumber\\
        &= \prod_{j=1}^{\ell(d)} \bbP\Bigl(\sum_{i\in \Sigma_j} y_i F(y_i) - g \in G_j \,\Big\vert\, \forall r < j:\sum_{i\in \Sigma_r} y_i F(y_i) - g \in G_r\Bigr) \label{eq: Decomp}  \\ 
        &\quad \times \bbP\Bigl(F(Y) = g\,\Big\vert\, \forall r \leq \ell(d):\sum_{i\in \Sigma_r} y_i F(y_i) - g \in G_r \Bigr).  \nonumber 
    \end{align}
    We next bound the probabilities occurring in \eqref{eq: Decomp}.
    Recall that the $y_i$ are independent. 
    Hence, if we fix some $j \leq \ell(d)$ and condition on the values achieved by the $y_i$ with $i\in \Sigma_{j-1}$, then    
    \begin{align}
        \bbP&\Bigl(\sum_{i\in \Sigma_{j}}y_i F(e_i) - g \in G_{j}\, \Big\vert\, \forall r < j:\sum_{i\in \Sigma_r} y_i F(y_i) - g \in G_r\Bigr)\label{eq: TotalProbStep}\\ 
        &= \bbE\Bigl[\bbP\Bigl(\sum_{i\in \Sigma_{j}}y_i F(e_i) - g \in G_{j}\, \Big\vert\, y_i : i\in \Sigma_{j-1}\Bigr) \, \Big\vert\, \forall r < j:\sum_{i\in \Sigma_r} y_i F(y_i) - g \in G_r\Bigr]\nonumber\\ 
        &\leq  \max_{h\in G_{j-1}} \bbP\Bigl(\sum_{i\in \sigma_{j}}y_i F(e_i) - h \in G_{j}\Bigr).  \nonumber 
    \end{align}
    Here, the final step used that $\Sigma_{j}\setminus \Sigma_{j-1} =\sigma_j$ and $\sum_{i\in \Sigma_{j-1}}y_i F(e_i) -g$ was identified with $h$.  
    Denote $F_{j}:V_{\sigma_{j}} \to G_{j-1}/G_j$ for the map found by restricting $F$ to $V_{\sigma_j}$ and reducing modulo $G_{j}$. 
    Recall \eqref{eq: Def_wj} and note that $w_j = \#\sigma_j$. 
    We claim that $F_j$ is a code of distance $w_j$; recall \Cref{def: Code}. 
    Indeed, the maximality of \eqref{eq: HChain} ensures that $G_{j-1}/G_j$ is a cyclic group of prime order and consequently, for every $i\in \sigma_j$, $F_j(e_i)$ generates $G_{j-1}/G_j$ since $F_j(e_i) \not\equiv 0 \bmod G_j$ by definition of $\sigma_j$; recall \eqref{eq: Def_sigmaj}. 
    Now apply \cite[Lemma 4.5]{nguyen2022random} to $F_j$ and use that $p_j =\#(G_{j-1}/G_j)$ to find 
    \begin{align}
        \bbP\Bigl(\sum_{i\in \sigma_{j}}y_i F(e_i) - h \in G_{j}\Bigr) \leq \frac{1}{p_j} +  \frac{p_j - 1}{p_j} \exp\Bigl(-\frac{\alpha w_j}{p_j^2}\Bigr).
    \end{align}
    Using this bound on the product in \eqref{eq: Decomp} yields the product in \eqref{eq: RobustBound}. 
    It remains to bound the remaining factor.
    Here, similarly to \eqref{eq: TotalProbStep}, we have 
    \begin{align}
        \bbP\Bigl(F(Y) = g\,\Big\vert\, \forall r \leq \ell(d):\sum_{i\in \Sigma_r} y_i F(y_i) - g \in G_r \Bigr) \leq \max_{h \in G_{\ell(d)}}\bbP\Bigl(\sum_{i\not\in \Sigma_{\ell(d)}}y_i F(e_i) = h  \Bigr).\nonumber    
    \end{align} 
    By the argument preceding \eqref{eq: RobustImpliesCode}, the restriction of $F$ to $V_{\setminus \Sigma_{\ell(d)}}$ defines a code of distance $\delta n$.
    Hence, by \cite[Lemma 4.5]{nguyen2022random} and the fact that $\exp(H)\mid \exp(G)$, 
    \begin{align}
        \bbP\Bigl(\sum_{i\not\in \Sigma_{\ell(d)}}y_i F(e_i) = h  \Bigr) \leq (\#H)^{-1} + \exp(-\alpha \delta n/a^2). 
    \end{align}
    It was here used that $(\#H -1)/\#H \leq 1$. 
    Use that $\# H = \#G/d$ to conclude the proof.   
\end{proof}
\begin{corollary}\label{cor: RobustMatrixBound}
    Adopt assumption \ref{a: X} and let $N$ be a $\bbZ[x]$-module such that all prime divisors of $\#N$ are in $\sP$.   
    Then, for every subgroup $H\subseteq N$ and every $F\in \Hom_{\bbZ}(\bbZ^n,N)$ which is $\delta$-robust for $H$, 
    \begin{align}
        \bbP(F\bX = xF) \leq& \bigl( (\#N)^{-1}d + \exp\bigl(-\alpha_n \delta n/a^2\bigr) \bigr)^n   \prod_{j=1}^{\ell(d)}\Bigl(p_j^{-1} + \frac{p_j -1}{p_j}\exp(-\alpha_n w_j/p_j^2) \Bigr)^n\nonumber
    \end{align}
    where $d,p_j,w_j$, and $a$ are defined as in \Cref{lem: RobustBound} with $G = N$.  
\end{corollary}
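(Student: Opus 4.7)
The plan is to reduce the matrix-valued event $\{F\bX = xF\}$ to $n$ independent column-wise events, each of which will be handled by \Cref{lem: RobustBound}.

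First, I would reinterpret the equation $F\bX = xF$ columnwise. Let $X_1,\ldots,X_n \in \bbZ^n$ denote the columns of $\bX$, so that $\bX e_i = X_i$. Then $(F\bX)(e_i) = F(X_i)$, while $(xF)(e_i) = x \cdot F(e_i)$, where the latter is well-defined as an element of $N$ because $N$ carries a $\bbZ[x]$-module structure. Hence the event $\{F\bX = xF\}$ is exactly the intersection $\bigcap_{i=1}^n \{F(X_i) = x F(e_i)\}$. Since the entries of $\bX$ are independent by \ref{a: X}, the columns $X_1,\ldots,X_n$ are independent $\bbZ^n$-valued random vectors, and each $X_i$ has independent entries that are $\alpha_n$-balanced mod $\sP$. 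In particular, by factoring over the columns,
\begin{align*}
    \bbP(F\bX = xF) = \prod_{i=1}^{n} \bbP\bigl(F(X_i) = x F(e_i)\bigr).
\end{align*}

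Next I would verify that \Cref{lem: RobustBound} applies to each factor. The hypothesis of that lemma requires a random vector whose entries are independent and $\alpha$-balanced modulo every prime divisor of $a = \exp(N)$. Since every prime divisor of $a$ also divides $\#N$, and by assumption every prime divisor of $\#N$ lies in $\sP$, the $\alpha_n$-balancedness mod $\sP$ hypothesis from \ref{a: X} transfers directly. Therefore \Cref{lem: RobustBound} may be applied with $G = N$, the subgroup $H \subseteq N$ for which $F$ is $\delta$-robust, the chain parameters $d,p_j,w_j,a$ as defined there, $\alpha = \alpha_n$, $Y = X_i$, and $g = x F(e_i)$.

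The crucial structural feature is that the bound produced by \Cref{lem: RobustBound} is uniform in the target element $g$: it depends only on the data $(N,H,F,\delta,\alpha_n,n)$ and not on which $g \in N$ one places on the right-hand side of the equation. Consequently, the same bound applies to every one of the $n$ column-wise probabilities, and multiplying them yields
\begin{align*}
    \bbP(F\bX = xF) \leq \Bigl( (\#N)^{-1} d + \exp(-\alpha_n \delta n/a^2)\Bigr)^{n} \prod_{j=1}^{\ell(d)} \Bigl( p_j^{-1} + \frac{p_j-1}{p_j} \exp(-\alpha_n w_j/p_j^2)\Bigr)^{n},
\end{align*}
which is exactly the claimed bound.

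There is no real obstacle here; the argument is essentially a column-factorization combined with a direct invocation of \Cref{lem: RobustBound}. The only subtlety worth flagging explicitly is the passage from balancedness mod $\sP$ (the hypothesis in \ref{a: X}) to balancedness mod the prime divisors of $a = \exp(N)$ (the hypothesis in \Cref{lem: RobustBound}), which relies on the assumption on $N$ that all prime divisors of $\#N$ lie in $\sP$.
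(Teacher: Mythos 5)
Your proposal is correct and follows essentially the same route as the paper: factor $\bbP(F\bX = xF) = \prod_{i=1}^n \bbP(F(\bX e_i) = xF(e_i))$ using independence of the entries, then apply \Cref{lem: RobustBound} to each factor with $Y = \bX e_i$ and $g = xF(e_i)$. Your extra remarks on the uniformity of the bound in $g$ and the transfer of balancedness from $\sP$ to the prime divisors of $\exp(N)$ are correct and merely make explicit what the paper leaves implicit.
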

\begin{proof} 
    Since the entries of $\bX$ are assumed to be independent, one has that $\bbP(F\bX = xF) = \prod_{i=1}^n \bbP(F(\bX e_i) = xF(e_i))$.
    The result is hence immediate from \Cref{lem: RobustBound} applied with $Y \de \bX e_i$ and $g \de xF(e_i)$. 
\end{proof}
\begin{lemma}\label{lem: NotCodeBound}
    Let $N$ be a $\bbZ[x]$-module such that all prime divisors of $\#N$ are in $\sP$ and adopt assumptions \ref{a: X} and \ref{a: alpha_n}.   
    Then, there exists $\delta_0 >0$ such that for every $\delta < \delta_0$ there exist constants $C,c >0$ such that for all $n\geq 1$ 
    \begin{align}
        \sum_{\substack{F\in \Sur_{\bbZ}(\bbZ^n,N) \\  F\text{ not a code of distance }\delta n}} \negquad 
        \bbP(F\bX = xF) \leq C n^{-c}. \nonumber
    \end{align}  
\end{lemma}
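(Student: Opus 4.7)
By the inclusion in \eqref{eq: NotACodeRobust}, the sum on the left-hand side of the lemma is bounded by $\sum_{H \subsetneq N}T_H$ where, for each proper subgroup $H$,
\begin{align*}
    T_H \de \sum_{\substack{F \in \Sur_{\bbZ}(\bbZ^n,N)\\ F\text{ is }\delta\text{-robust for }H}}\bbP(F\bX = xF).
\end{align*}
Since there are only finitely many such subgroups, it suffices to establish the bound for each $T_H$ individually. Fix $H$, let $d \de [N:H]$, and fix a maximal chain $N = G_0 \supsetneq G_1 \supsetneq \cdots \supsetneq G_{\ell(d)} = H$ with $p_j \de [G_{j-1}:G_j]$ prime.

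Stratify the $\delta$-robust morphisms $F$ by the weight vector $\vec w$ from \eqref{eq: Def_wj}, which satisfies $|\vec w|\leq \ell(d)\delta n$. Crucially, surjectivity of $F$ forces $w_1 \geq 1$, since otherwise the image of $F$ would lie in $G_1 \subsetneq N$. The number of $F$ with a given weight vector equals $\binom{n}{w_1,\ldots,n-|\vec w|}\prod_j((p_j-1)\#G_j)^{w_j}(\#H)^{n-|\vec w|}$. Combining this count with Corollary~\ref{cor: RobustMatrixBound} and absorbing the $(1+\#H\exp(-\alpha_n\delta n/a^2))^n = 1+o(1)$ factor allowed by \ref{a: alpha_n}, the contribution $T_H$ is bounded, up to a factor of $1+o(1)$, by
\begin{align*}
    \Sigma_H \de \sum_{\substack{\vec w:\ |\vec w|\leq \ell(d)\delta n\\ w_1\geq 1}}\binom{n}{w_1,\ldots,n-|\vec w|}\prod_j\bigl((p_j-1)q_j'\bigr)^{w_j}\prod_j P_j(w_j)^n
\end{align*}
with $q_j' \de \prod_{k>j}p_k$ and $P_j(w_j)$ as in Corollary~\ref{cor: RobustMatrixBound}.

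To bound $\Sigma_H$, rewrite $\prod_j P_j(w_j)^n = d^{-n}\prod_j(1+(p_j-1)\exp(-\alpha_n w_j/p_j^2))^n$, and set the threshold $W_n \de K\alpha_n^{-1}\log n$ for a large constant $K$. Each factor $(1+(p_j-1)\exp(-\alpha_n w_j/p_j^2))^n$ is at most $p_j^n$ trivially, at most $p_j^n\exp(-c_j n\alpha_n)$ for $w_j \geq 1$ (with some $c_j>0$, which is super-polynomially small in $n$ by \ref{a: alpha_n}), and at most $1+o(1)$ for $w_j \geq W_n$. Partition the indices $\{1,\ldots,\ell(d)\}$ into $(J^0, J^s, J^l)$ according to whether $w_j = 0$, $1 \leq w_j < W_n$, or $w_j \geq W_n$ (with $1 \in J^s \cup J^l$ by surjectivity); applying the three bounds yields $\prod_j P_j(w_j)^n \leq d_{J^l}^{-n}\exp(-n\alpha_n C_{J^s})$ where $d_{J^l}\de\prod_{j\in J^l}p_j$ and $C_{J^s}\de \sum_{j\in J^s}c_j$. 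In the \emph{bulk} case $J^s = \emptyset$ with $J^l$ such that $d_{J^l}$ is comparable to $d$, the factor $d_{J^l}^{-n}$ combines with the multinomial identity $\sum_j(p_j-1)q_j' = d-1$ and the restriction $|\vec w|\leq \ell(d)\delta n$ (an atypical deviation under a multinomial distribution with success parameter $(d-1)/d$) to yield exponential decay $\exp(-c_\delta n)$ by Chernoff's inequality, provided $\delta < (d-1)/(d\ell(d))$. In the \emph{atypical} cases $J^s \neq \emptyset$, the super-polynomial decay $\exp(-n\alpha_n C_{J^s})$ suppresses the contribution, once the remaining combinatorial factor is controlled via a further Chernoff-type estimate on the subchain indexed by $J^l$.

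The main technical obstacle lies in the atypical cases, particularly when $J^l$ is small or empty. In these regimes, the naive combinatorial bound on the weights can be as large as $d^n$, and one must carefully pair the super-polynomial decay from $\exp(-n\alpha_n C_{J^s})$ with both (i) the constraint $|\vec w|\leq \ell(d) W_n = o(n)$ that holds automatically when $J^l = \emptyset$, and (ii) the linear-algebraic constraints imposed by surjectivity of $F$, which rule out configurations where the weights are too small to generate $N$ from $H \cup \bigcup_{j \in J^s}(G_{j-1}\setminus G_j)$. The bookkeeping here parallels the treatment of non-robust morphisms in \cite[Section~4]{nguyen2022random}, and yields an overall bound of $Cn^{-c}$ for all $\delta < \delta_0$ with $\delta_0$ chosen small enough. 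Summing the contributions over the $3^{\ell(d)}$ tripartitions and the finitely many proper subgroups $H \subsetneq N$ completes the proof.
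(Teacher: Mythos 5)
Your setup is the same as the paper's: both arguments bound the sum via \eqref{eq: NotACodeRobust}, stratify the $\delta$-robust morphisms by the weight vector $(w_1,\ldots,w_{\ell(d)})$ (using $w_1\geq 1$ from surjectivity and $\sum_j w_j\leq \ell(d)\delta n$ from robustness), count the morphisms with a given weight vector, and then apply \Cref{cor: RobustMatrixBound}; your exact multinomial count is a harmless sharpening of the bound the paper imports from \cite[Lemma 4.10]{nguyen2022random}, and your handling of the $(1+\#H\exp(-\alpha_n\delta n/a^2))^n$ factor and of the bulk case via $\sum_j(p_j-1)q_j'=d-1$ and Chernoff is fine. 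The divergence is in how the resulting sum is estimated: the paper does not redo this computation at all, but observes that the assembled sum \eqref{eq: SumFRobustH} is the $u=0$ case of the sum in the proof of \cite[Theorem 4.12]{nguyen2022random} and follows that argument verbatim.

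The estimation scheme you do spell out has a genuine gap in the sparse regime permitted by \ref{a: alpha_n}. For $1\leq w_j<W_n$ with $W_n=K\alpha_n^{-1}\log n$ you only use the flat gain $p_j^n\exp(-c_jn\alpha_n)$, while the combinatorial cost of the indices in $J^s$ can be as large as $\exp\bigl(O(W_n\log n)\bigr)=\exp\bigl(O(\alpha_n^{-1}\log^2 n)\bigr)$ (and when $J^l=\emptyset$ the constraint $|\vec w|\leq \ell(d)W_n$ does not improve this). Hence your bound closes only when $n\alpha_n\gg \alpha_n^{-1}\log^2 n$, i.e.\ $\alpha_n\gg \log n/\sqrt{n}$, whereas \ref{a: alpha_n} allows $\alpha_n$ barely above $\log n/n$. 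The repair is to keep the $w_j$-dependence of the gain: since $1-e^{-y}\geq (1-e^{-1})\min(y,1)$ one has
\begin{align*}
\Bigl(p_j^{-1}+\tfrac{p_j-1}{p_j}e^{-\alpha_n w_j/p_j^2}\Bigr)^n \;\leq\; \exp\bigl(-c_j\,n\min(\alpha_n w_j,1)\bigr),
\end{align*}
so each index $j$ with $1\leq w_j\leq \alpha_n^{-1}$ contributes at most $\exp\bigl(-w_j(c_jn\alpha_n-\log(n\#N))\bigr)$, which is geometrically summable in $w_j$ and yields the $n^{-c}$ rate under \ref{a: alpha_n}, while the regime $w_j\geq\alpha_n^{-1}$ gives a flat $\exp(-c_jn)$ that beats the entropy factor $\exp\bigl(nH(\ell(d)\delta)+O(\delta n)\bigr)$ for $\delta$ small. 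This $w_j$-proportional bookkeeping (not a flat gain plus the threshold $W_n$) is exactly what the Nguyen--Wood computation provides; also note that surjectivity enters only through $w_1\geq 1$ --- no further ``linear-algebraic constraints'' are needed or used. Since you ultimately defer the atypical cases to \cite[Section 4]{nguyen2022random} anyway, the cleanest route is the paper's: assemble \eqref{eq: SumFRobustH} and invoke their proof of Theorem 4.12 with $u=0$.
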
 
\begin{proof}
    By \eqref{eq: NotACodeRobust}, one may upper bound the sum as 
    \begin{align}
        \sum_{\substack{F\in \Sur_{\bbZ}(\bbZ^n,N) \\  F\text{ not a code of distance }\delta n}} \negquad
        \bbP(F\bX = xF) \leq \sum_{H\subsetneq N}\sum_{\substack{F\in \Sur_{\bbZ}(\bbZ^n,N) \\  F\text{ is }\delta\text{-robust for }H}}\bbP(F\bX = xF).  \label{eq: StepSplitNotCodeRobust} 
    \end{align} 
    Fix some proper subgroup $H\subsetneq N$ and pick a maximal chain of subgroups $G_j$ as in \eqref{eq: HChain} with $G=N$.    
    We denote $d \de [N:H]$. 

    By \cite[Lemma 4.10]{nguyen2022random}, the number of $F\in \Hom_{\bbZ}(\bbZ^n,N)$ which are $\delta$-robust for $H$ and satisfy that there are exactly $w_j$ indices $i\leq n$ with $F(e_i) \in G_{j-1}\setminus G_j$ for $1 \leq j \leq \ell(d)$ is at most $(\# H)^{n-\sum_{j=1}^{\ell(d)}w_j} \prod_{j=1}^{\ell(d)} \binom{n}{w_j}(\# G_{j-1})^{w_j}$.
    When $F$ is surjective we have $w_1 \neq 0$. 
    Further, when $F$ is $\delta$-robust for $H$ we have $w_j \leq \ell(d) \delta n$ for all $j\leq \ell(d)$. 
    Indeed, if this were not the case then we would have $\#\{i\leq n: F(e_i) \not\in H \} > \ell(d)\delta n$ which contradicts \eqref{eq: Def_Robust}.   
    Now, by the combination of \Cref{cor: RobustMatrixBound} with the aforementioned count on the number of $\delta$-robust morphisms, 
    \begin{align}
        \sum_{\substack{F\in \Sur_{\bbZ}(\bbZ^n,N) \\  F\text{ is }\delta\text{-robust for }H}}&\bbP(F\bX = xF)\leq \negsp\sum_{\substack{0 \leq w_1,\ldots,w_{\ell(d)}\leq \ell(d) \delta n\\ w_1 \neq 0} }\negsp (\#H)^{n-\sum_{j=1}^{\ell(d)}w_j} \prod_{j=1}^{\ell(d)} \binom{n}{w_j}(\# G_{j-1})^{w_j}\label{eq: SumFRobustH}  \\ 
        &\times  \bigl( (\#N)^{-1}d + \exp\bigl(-\alpha_n \delta n/a^2\bigr) \bigr)^n \prod_{j=1}^{\ell(d)}\Bigl(p_j^{-1} + \frac{p_j -1}{p_j}\exp(-\alpha_n w_j/p_j^2) \Bigr)^n.\nonumber 
    \end{align} 
    It remains a nontrivial task to compute the right-hand side of \eqref{eq: SumFRobustH}. 
    Fortunately, a related sum was considered by Nguyen and Wood \cite{nguyen2022random} and we can extract the relevant estimate from their proofs.
    More precisely, the sum in \eqref{eq: SumFRobustH} is a special case of the sum which occurs in the first centered equation of the proof of \cite[Theorem 4.12]{nguyen2022random}: take $u=0$ in their notation. 
    Following the arguments word-for-word up to the centered inequality at the end of page 23 in \cite{nguyen2022random} now yields the desired result. 
\end{proof} 
\begin{lemma}\label{lem: NotCodeSecondBound}
    Adopt assumptions \ref{a: X} and \ref{a: alpha_n} and fix a $\bbZ[x]$-module $N$ such that all prime divisors of $\#N$ are in $\sP$.
    Then, there exists $\delta_0 >0$ such that for every $\delta < \delta_0$ there exist constants $C,c >0$ such that for all $n\geq 1$ 
    \begin{align}
        \sum_{\substack{F\in \Sur_{\bbZ}(\bbZ^n,N) \\  F\text{ not a code of distance }\delta n}}\negquad \lvert 
        \bbP(F\bX = xF) - (\#N)^{-n} \rvert \leq C n^{-c}.\nonumber 
    \end{align}  
\end{lemma}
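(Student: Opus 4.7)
The plan is to handle the sum via the triangle inequality
\begin{align*}
\bigl\lvert \bbP(F\bX = xF) - (\#N)^{-n}\bigr\rvert \leq \bbP(F\bX = xF) + (\#N)^{-n},
\end{align*}
and estimate the two resulting sums separately. The first, summed over $F\in \Sur_{\bbZ}(\bbZ^n,N)$ which are not codes of distance $\delta n$, is bounded by $Cn^{-c}$ directly from \Cref{lem: NotCodeBound}, provided $\delta$ is below the threshold $\delta_0$ given there. All that remains, therefore, is to show that the number of surjective non-codes of distance $\delta n$, multiplied by $(\#N)^{-n}$, also decays at least like $n^{-c}$; for this I will use a counting argument plus an entropy estimate.

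\textbf{Counting non-codes.} By \Cref{def: Code}, if $F \in \Hom_{\bbZ}(\bbZ^n, N)$ is not a code of distance $\delta n$, then there exist $\sigma \subseteq \{1,\ldots,n\}$ with $\#\sigma < \delta n$ and a proper subgroup $H \subsetneq N$ such that $F(V_{\setminus \sigma}) \subseteq H$. For fixed $\sigma$ and $H$, the number of $F$ satisfying this containment is at most $(\#N)^{\#\sigma}(\#H)^{n-\#\sigma}$, since $F(e_i)$ is unrestricted for $i\in \sigma$ and constrained to lie in $H$ for $i\notin \sigma$. A union bound over $\sigma$ and $H$, using that $\#H \leq (\#N)/2$ for any proper subgroup and the standard entropy estimate $\sum_{k=0}^{\lfloor \delta n \rfloor}\binom{n}{k} \leq 2^{h(\delta)n}$ with $h$ the binary entropy function, yields
\begin{align*}
\#\{F\in \Hom_{\bbZ}(\bbZ^n,N):F\text{ not a code of distance }\delta n\} \leq S_N \cdot 2^{h(\delta)n}(\#N)^{\delta n}(\#N/2)^{n-\delta n},
\end{align*}
where $S_N$ denotes the number of proper subgroups of $N$. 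Dividing by $(\#N)^n$ gives the upper bound $S_N \cdot 2^{(h(\delta) - (1-\delta))n}$ for the second contribution, summed over surjective non-codes (or indeed all non-codes).

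\textbf{Conclusion and expected difficulty.} Since $h(\delta)\to 0$ as $\delta \to 0^+$, I can choose $\delta_0$ small enough that $h(\delta) < 1-\delta$ for all $\delta < \delta_0$, shrinking $\delta_0$ further if necessary so that \Cref{lem: NotCodeBound} still applies. Then the second contribution decays exponentially in $n$, which is easily absorbed into $Cn^{-c}$ for appropriate constants; combining with \Cref{lem: NotCodeBound} yields the stated bound. I do not expect any substantial obstacle: the hard work on probabilities has already been done in \Cref{lem: NotCodeBound} and in its inputs from \cite{nguyen2022random}, and the only new ingredient here is the elementary counting above. The one point requiring care is the interplay of thresholds on $\delta$, which is handled by taking the minimum of the entropy threshold and the one from \Cref{lem: NotCodeBound}.
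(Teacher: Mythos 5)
Your proposal is correct and follows essentially the same route as the paper: split off the $(\#N)^{-n}$ term via the triangle inequality, invoke \Cref{lem: NotCodeBound} for the probability part, and bound the number of surjective non-codes by a union bound over proper subgroups $H\subsetneq N$ combined with the binary entropy estimate and $\#H\leq \#N/2$, shrinking $\delta_0$ to satisfy both constraints. The only cosmetic difference is that the paper fixes $\#\sigma=\lfloor\delta n\rfloor$ while you sum over all $\#\sigma<\delta n$; both give the same exponential decay.
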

\begin{proof}
    By \Cref{def: Code}, if $F$ is not a code of distance $\delta n$ then we can find some $\sigma \subseteq \{1,\ldots,n \}$ and a proper subgroup $H \subsetneq N$ such that $F(V_{\setminus \sigma}) \subseteq H$ and $\# \sigma = \lfloor \delta n \rfloor$.  
    Hence, since $F$ is uniquely determined by the images of the basis elements of $\bbZ^n$, 
    \begin{align}
        \sum_{\substack{F\in \Sur_{\bbZ}(\bbZ^n,N) \\  F\text{ not a code of distance }\delta n}}\negquad \negquad (\#N)^{-n}  \leq \sum_{H \subsetneq N} \binom{n}{\lfloor \delta n\rfloor}  (\# H)^{n - \lfloor \delta n \rfloor} (\# N)^{-n + \lfloor \delta n \rfloor}.\label{eq: NotCodeSur}      
    \end{align}
    The binary entropy bound \cite[Eq.(7.14), p.151]{cover1999elements} implies that $\binom{n}{\lfloor \delta n\rfloor} \leq 2^{n E(\lfloor \delta n \rfloor/n)}$ where $E(y)\de -y\log_2(y) + (1-y)\log_2(1-y)$.
    Note that $\lim_{y \to 0} E(y) = 0$. 
    We can hence find some sufficiently small $\delta_0$ such that \Cref{lem: NotCodeBound} is applicable and $\delta_0 + E(\lfloor \delta_0 n \rfloor/n) < 1/2$ for all $n\geq 1$. 
    Then, since $\#H \leq \#N /2$, we have for every $\delta < \delta_0$ that 
    \begin{align}
        \sum_{\substack{F\in \Sur_{\bbZ}(\bbZ^n,N) \\  F\text{ not a code of distance }\delta n}}\negquad \negquad (\#N)^{-n}  \leq S_N 2^{(E(\lfloor \delta n\rfloor /n ) + \delta - 1)n} \leq S_N 2^{-n/2}\label{eq: AppliedBinaryEntropy}  
    \end{align}  
    where $S_N$ is the number of proper subgroups of $N$.
    Let $C',c'>0$ be such that $S_N 2^{-n/2} \leq C' n^{-c'}$ for all $n\geq 1$ and use \Cref{lem: NotCodeBound} together with the triangle inequality to conclude the proof.   
\end{proof}
\subsubsection{Combining the estimates}\label{sec: CombineEstimates}
We finally combine all preceding estimates to complete the proof of \Cref{prop: LimitingMoments}. 
\begin{proof}[Proof of \texorpdfstring{\Cref{prop: LimitingMoments}}{Proposition}]
    By \eqref{eq: SurMomentToProbSum} and the triangle inequality,     
    \begin{align}
        \lvert \bbE[\#\Sur_{\bbZ[x]}(\coker(\tilde{\bW}),N)] - (\#{}&{} N)^{-k} \rvert \leq \negsp\sum_{F\in  \Sur_{\bbZ}(\bbZ^n,N): F\bB = 0}\left\lvert (\#N)^{-n} - \bbP(F\bX = xF)  \right\rvert \label{eq:TediousVolt}\\
        &\quad +\left\lvert  (\# N)^{-n}\#\{F\in \Sur_{\bbZ}(\bbZ^n,N): F\bB = 0\}- (\# N)^{-k}   \right\rvert.  \nonumber 
    \end{align}
    Pick some $\delta >0$ which is sufficiently small to ensure that \Cref{lem: NotCodeSecondBound} is applicable.
    Then, by the triangle inequality,   
    \begin{align}
        &\sum_{F\in  \Sur_{\bbZ}(\bbZ^n,N): F\bB = 0}\lvert (\#N)^{-n} - \bbP(F\bX = xF)  \rvert  \label{eq: SurFB0CodeNotCode}\\ 
        &\quad \leq\negsp \sum_{\substack{F\in \Hom_{\bbZ}(\bbZ^n,N) \\ F\text{ a code of distance }\delta n}}\negquad \lvert (\#N)^{-n} - \bbP(F\bX = xF)  \rvert 
         +\negsp \sum_{\substack{F\in \Sur_{\bbZ}(\bbZ^n,N) \\ F\text{ not a code of distance }\delta n}} \negquad \negquad \lvert (\#N)^{-n} - \bbP(F\bX = xF)\rvert .\nonumber
    \end{align}
    Let $c,C>$ be the constants from \Cref{lem: CodeEstimate} and let $c',C'>0$ be the constants from \Cref{lem: NotCodeSecondBound}. 
    Then, the right-hand side of \eqref{eq: SurFB0CodeNotCode} is at most $Cn^{-c} + C'n^{-c'}$ and hence tends to zero as $n$ tends to infinity. 
    Further, by \Cref{lem: NumSubFB0}, there exists a constant $C''>0$ such that
    \begin{align}
        \left\lvert  (\# N)^{-n}\#\{F\in \Sur_{\bbZ}(\bbZ^n,N): F\bB = 0\}- (\# N)^{-k}   \right\rvert \leq C'' 2^{-n}.\label{eq: SurFB}
    \end{align}
    Remark that the right-hand side of \eqref{eq: SurFB} tends to zero as $n$ tends to infinity to conclude the proof. 
\end{proof}
\subsection{Solving the moment problem}\label{sec: SolveMoment}
We next apply a general result concerning measures on categories of \cite[Theorem 1.6]{sawin2022moment} to invert the moment problem. 
Using \cite[Lemma 6.1]{sawin2022moment} and \cite[Lemma 6.3]{sawin2022moment}, that result may be specialized to our context---namely to limiting measures on the category of finite modules with $N$-moments given by $(\#N)^{-k}$. 
Let us state this specialization explicitly for the sake of definiteness:
\begin{lemma}[Special case of {\cite[Theorem 1.6]{sawin2022moment}}]\label{lem: SawinWoodSpecialCase}
    Let $R$ be a ring and consider a sequence of random finite $R$-modules $X_{n}$ such that for every fixed finite $R$-module $N$ 
    \begin{align*}
        \lim_{n\to \infty} \bbE[\#\Sur_{R}(X_n, N)] = (\#N)^{-k}.
    \end{align*} 
    Let $S$ be quotient ring of $R$ with $\#S < \infty$ and let $L_1,\ldots,L_r$ be representatives of the isomorphism classes of finite simple $S$-modules. 
    Further, denote $q_i$ for the cardinality of the endomorphism field of $L_i$ for every $i\leq r$.
    Then, for every finite $S$-module $N$,
    \begin{align}
        \lim_{n\to \infty}\bbP(X_n\otimes_{R}S \cong N)
        &= \frac{1}{ (\# N)^k\# \Aut_S(N)} \prod_{i=1}^r \prod_{j=1}^\infty \Bigl(1 - \frac{q_i^{-j}\#\Ext^1_S(N,L_i) }{\#\Hom_{S}(N,L_i) (\# L_i)^k} \Bigr).\nonumber
    \end{align}
\end{lemma}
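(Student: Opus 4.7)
The plan is to obtain this lemma as a direct specialization of the cited Sawin--Wood machinery, so most of the work is in the reduction and in matching the formulas. First, I would pass from $R$-module moments to $S$-module moments via the tensor--hom adjunction. Since $S$ is a quotient of $R$, any finite $S$-module $N$ pulls back to an $R$-module, and
\begin{align*}
    \Hom_R(X_n, N) = \Hom_S(X_n \otimes_R S, N),
\end{align*}
with surjectivity preserved under this bijection. Consequently the hypothesis transfers verbatim:
\begin{align*}
    \lim_{n\to \infty} \bbE\bigl[\#\Sur_S(X_n \otimes_R S, N)\bigr] = (\#N)^{-k}
\end{align*}
for every finite $S$-module $N$. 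From here the problem is entirely one about random finite modules over the finite ring $S$.

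The second step is to invoke Sawin--Wood [Theorem 1.6]: if a sequence of random finite $S$-modules has $N$-moments that converge to those of a well-defined probability measure on isomorphism classes of finite $S$-modules, and the moments do not grow too rapidly, then the laws themselves converge to that measure. The moment bound is trivial here because the putative limiting moments $(\#N)^{-k}$ are bounded above by $1$, so the technical hypotheses on growth and convergence in [Theorem 1.6] are verified without effort.

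The third and final step is to identify the explicit limit measure. This is the content of [Lemma 6.1] together with the computational [Lemma 6.3] in Sawin--Wood: both results construct, for each fixed $k$, the probability measure on isomorphism classes of finite $S$-modules whose $N$-moments equal $(\#N)^{-k}$, and derive the product formula indexed by the simple $S$-modules $L_1,\ldots,L_r$. Substituting the formula from [Lemma 6.3] into the conclusion of [Theorem 1.6] yields exactly the claimed identity, with the prefactor $(\#N)^{-k}\#\Aut_S(N)^{-1}$ and the Euler-type product $\prod_{i,j}\bigl(1 - q_i^{-j}\#\Ext^1_S(N,L_i)/(\#\Hom_S(N,L_i)(\#L_i)^k)\bigr)$.

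The main obstacle, such as it is, lies in verifying that the hypotheses of [Theorem 1.6] apply in the form stated: one must check that the ring $S$ and the class of $S$-modules fit the abstract categorical framework of Sawin--Wood, and that the chosen measure is a genuine probability measure (total mass $1$). Both points are handled by [Lemma 6.1] and [Lemma 6.3], so once those are cited as black boxes the specialization is routine; no new ideas beyond bookkeeping and the tensor--hom reduction are required.
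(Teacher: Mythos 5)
Your proposal matches the paper's treatment: the paper does not prove this lemma in detail either, but obtains it exactly as you do, by transferring the moment hypothesis to $S$-modules (via the identification of $\Sur_R(X_n,N)$ with $\Sur_S(X_n\otimes_R S,N)$ for $S$-modules $N$) and then citing \cite[Theorem 1.6]{sawin2022moment} together with \cite[Lemmas 6.1 and 6.3]{sawin2022moment} for the moment-determinacy and the explicit product formula, noting as you do that moments bounded by $1$ make the growth hypotheses and the total-mass-one check immediate. So your argument is correct and essentially identical to the paper's.
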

Let us further remark that it follows from the statement of \cite[Theorem 1.6]{sawin2022moment} that the limiting measure in \Cref{lem: SawinWoodSpecialCase} has $N$-moments given by $(\#N)^{-k}$. 
In particular, the $N$-moment for $N = \{0 \}$ is equal to one which implies that the limiting measure is a probability measure.   

It now remains to combine \Cref{prop: LimitingMoments} with \Cref{lem: SawinWoodSpecialCase} and to simplify the results. 
This can be accomplished with a direct computation. 
Note that the following result implies \Cref{thm: Sparse_MainResult} as the special case with $k=1$.   
\begin{proposition}\label{prop: InvertMomentSawinWood}
    Adopt assumptions \ref{a: X} to \ref{a: alpha_n}. 
    Fix a positive integer $m\geq 1$, a monic polynomial $Q\in \bbZ[x]$ of degree $\geq 1$, and introduce $Q_{i,p}\in \bbF_p[x]$, $r_p\geq 1$, and $d_{i,p} = \deg Q_{i,p}$ using the factorization of $Q$ modulo $p$ as in \Cref{thm: Sparse_MainResult}.      

    For every $p\in \sP$ denote $S_{p^m} \de \bbZ[x]/( p^m\bbZ[x] + Q(x)\bbZ[x] )$. 
    Then, given a finite $S_{p^m}$-module $N_{p^m, Q}$ for every $p\in \sP$, it holds that  
    \begin{align*}
        \lim_{n\to \infty}\bbP(\forall p\in \sP:&\coker(\tilde{\bW})_{p^m,Q} \cong  N_{p^m,Q}) = \prod_{p\in \sP}\biggl( \frac{1}{(\#N_{p^m,Q})^k\#\Aut_{S_{p^m}}(N_{p^m,Q})}\\ 
        &\quad \times \prod_{i = 1}^{r_p} \prod_{j=1}^\infty \Bigl(1-\frac{\#\Ext_{S_{p^m}}^1\bigl(N_{p^m,Q}, \bbF_p[x]/(Q_{i,p}(x)\bbF_p[x]) \bigr)}{\#\Hom_{S_{p^m}}\bigl(N_{p^m,Q}, \bbF_p[x]/(Q_{i,p}(x)\bbF_p[x]) \bigr)} p^{-(k+j)d_{i,p} } \Bigr)\biggr).
    \end{align*} 
\end{proposition}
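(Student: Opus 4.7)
The plan is to combine Proposition~\ref{prop: LimitingMoments} with Lemma~\ref{lem: SawinWoodSpecialCase} after a change of base ring. I introduce the finite quotient ring
\[
S \de \bbZ[x]\Big/\Big(\textstyle\prod_{p\in \sP} p^m,\, Q(x)\Big),
\]
and observe by the Chinese Remainder Theorem (using that the ideals $(p^m, Q(x))$ for distinct $p\in \sP$ are pairwise coprime in $\bbZ[x]$, since $(p^m, q^m) = \bbZ[x]$ for distinct primes) that $S \cong \prod_{p \in \sP} R_{p^m, Q}$ with orthogonal idempotents $\{e_p\}_{p\in\sP}$. Let $X_n \de \coker(\tilde{\bW}) \otimes_{\bbZ[x]} S$, which is a random finite $S$-module satisfying $X_n \cong \bigoplus_{p\in \sP} \coker(\tilde{\bW})_{p^m,Q}$. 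Consequently, for any finite $S$-module $N$ with $p$-components $N_{p^m,Q} \de e_p N$, the event $\{X_n \cong N\}$ coincides with $\{\forall p \in \sP:\coker(\tilde{\bW})_{p^m,Q} \cong N_{p^m,Q}\}$, and I apply \Cref{lem: SawinWoodSpecialCase} to $X_n$ with both the base ring and the quotient ring equal to $S$.

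To verify the moment hypothesis, note that for any finite $S$-module $N$, the universal property of the tensor product gives $\#\Sur_S(X_n, N) = \#\Sur_{\bbZ[x]}(\coker(\tilde{\bW}), N)$. Since $N$ is annihilated by $\prod_{p\in\sP} p^m$, every prime divisor of $\# N$ lies in $\sP$, so \Cref{prop: LimitingMoments} yields $\lim_{n \to \infty} \bbE[\#\Sur_S(X_n, N)] = (\#N)^{-k}$ as required.

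Next I identify the finite simple $S$-modules. By the product decomposition, any simple $S$-module is supported on a single factor $R_{p^m,Q}$, and the maximal ideals of $R_{p^m,Q} = \bbZ[x]/(p^m, Q)$ correspond bijectively to maximal ideals of $\bbZ[x]$ containing $(p, Q(x))$---equivalently, to the distinct monic irreducible factors $Q_{i,p}$ of $Q$ modulo $p$. Hence the simple $S$-modules are $L_{p,i} \de \bbF_p[x]/(Q_{i,p}(x))$ for $p \in \sP$ and $1 \leq i \leq r_p$; each $L_{p,i}$ is itself a field of order $p^{d_{i,p}}$, so its endomorphism field has cardinality $q_{p,i} = p^{d_{i,p}}$.

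Combining everything is then routine. The orthogonal idempotent decomposition of $S$ forces $\#\Aut_S(N) = \prod_{p}\#\Aut_{R_{p^m,Q}}(N_{p^m,Q})$, and since each $L_{p,i}$ is supported on the single component $R_{p^m,Q}$, one also has $\#\Hom_S(N, L_{p,i}) = \#\Hom_{R_{p^m,Q}}(N_{p^m,Q}, L_{p,i})$ and analogously for $\Ext^1$. Plugging these identities into the formula of \Cref{lem: SawinWoodSpecialCase} and using $q_{p,i}^{-j}(\#L_{p,i})^{-k} = p^{-(k+j)d_{i,p}}$ produces the product over $p \in \sP$ in the statement. The one point deserving explicit care---not a substantive obstacle---is the $\Ext^1$ identity: it holds because $R_{p^m,Q}$ is a direct summand of $S$ as a ring via the idempotent $e_p$, so any projective $S$-resolution of $N$ restricts to a projective $R_{p^m,Q}$-resolution of $N_{p^m,Q}$, preserving $\Ext^1$ upon applying $\Hom(\cdot, L_{p,i})$.
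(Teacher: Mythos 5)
Your proof is correct and follows essentially the same route as the paper: verify the $N$-moments via \Cref{prop: LimitingMoments} and the surjection bijection, invoke \Cref{lem: SawinWoodSpecialCase}, identify the simple modules as $\bbF_p[x]/(Q_{i,p}(x)\bbF_p[x])$ with endomorphism fields of order $p^{d_{i,p}}$, and split $\Aut$, $\Hom$, and $\Ext^1$ across the CRT decomposition. The only (harmless) difference is bookkeeping: you apply the lemma with base ring equal to the finite ring $S=\bbZ[x]/(\prod_{p\in\sP}p^m,Q)$ and trivial quotient, whereas the paper takes base ring $R=\bbZ[x]/((\prod_{p\in\sP}p^m)\bbZ[x])$ and quotient $S=R/Q(x)R$, and it handles the $\Ext^1$ splitting via additivity in the first argument rather than your idempotent argument; both are valid.
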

\begin{proof}
    Denote $R \de \bbZ[x]/((\prod_{p\in \sP} p^m)\bbZ[x])$ and note that for any $\bbZ[x]$-module $M$ and any $R$-module $N$ one has a bijection between $\Sur_{R}(M\otimes_{\bbZ[x]}R,N)$ and $\Sur_{\bbZ[x]}(M ,N)$.
    Consequently, by \Cref{prop: LimitingMoments}, we have that for every finite $R$-module $N$   
    \begin{align}
        \lim_{n\to \infty}\bbE[\#\Sur_R(\coker(\tilde{\bW})\otimes_{\bbZ[x]}R,N)] = (\# N)^{-k}. \label{eq:RoyalLeaf}
    \end{align}
    Let $S\de R/(Q(t)R)$ and define a finite $S$-module by $N \de \oplus_{p\in \sP} N_{p^m,Q}$. 
    The Chinese remainder theorem yields $R \cong \oplus_{p\in \sP} \bbZ[x]/(p^m \bbZ[x])$ which implies that $S = \oplus_{p\in \sP}S_{p^m}$ and $\coker(\tilde{\bW})\otimes_{\bbZ[x]}S \cong \oplus_{p\in \sP} \coker(\tilde{\bW})_{p^m, Q}$.
    Consequently, by \Cref{lem: SawinWoodSpecialCase} and the fact that isomorphism occurs if and only if the corresponding summands are isomorphic, 
    \begin{align}
        \lim_{n\to \infty}\bbP\bigl(\forall p\in \sP:&\coker(\tilde{\bW})_{p^m,Q} \cong  N_{p^m,Q}\bigr) = \lim_{n\to \infty}\bbP\bigl((\coker(\tilde{\bW})\otimes_{\bbZ[x]} R) \otimes_{R} S \cong N\bigr)\nonumber    \\ 
        &\  = \frac{1}{(\# N)^k\# \Aut_S(N) } \prod_{i=1}^r \prod_{j=1}^\infty \Bigl(1 - \frac{\#\Ext^1_S(N,L_i) }{\#\Hom_{S}(N,L_i) (\# L_i)^k}q_i^{-j} \Bigr). \label{eq:NewRug}
    \end{align}
    Simple modules over $S$ are precisely the modules of the form $S/m$ with $m$ a maximal ideal of $S$. 
    Further, maximal ideals of $\bbZ[x]$ are of the form $m = p\bbZ[x] + f(x)\bbZ[x]$ with $p$ a prime and $f(x)\in \bbZ[x]$ irreducible modulo $p$ of degree $\geq 1$ \cite[p.22]{reid1995undergraduate}. 
    Hence, since maximal ideals of $S$ are in one-to-one correspondence with maximal ideals of $\bbZ[x]$ which contain $\prod_{p\in \sP}p^m$ and $Q$, the modules $L_i$ in \eqref{eq:NewRug} are of the form $\bbF_p[x]/(Q_{i,p}(x)\bbF_p[x])$. 
    Consequently, by \eqref{eq:NewRug} and the fact that the endomorphism field of $\bbF_p[x]/(Q_{i,p}(x)\bbF_p[x]))$ is isomorphic to $\bbF_p[x]/(Q_{i,p}(x)\bbF_p[x])$ which is a finite field of order $p^{d_{i,p}}$,    
    \begin{align}
        &\lim_{n\to \infty}\bbP\bigl(\forall p\in \sP:\coker(\tilde{\bW})_{p^m,Q} \cong  N_{p^m,Q}\bigr) \label{eq:SafeHat}\\ 
        &= \frac{1}{(\#N)^k\#\Aut_{S}(N)}\prod_{p\in \sP}  \prod_{i = 1}^{r_p} \prod_{j=1}^\infty \Bigl(1-\frac{\#\Ext_{S}^1\bigl(N, \bbF_p[x]/(Q_{i,p}(x)\bbF_p[x]) \bigr)}{\#\Hom_{S}\bigl(N, \bbF_p[x]/(Q_{i,p}(x)\bbF_p[x]) \bigr)} p^{-(k+j)d_{i,p} } \Bigr).  \nonumber
    \end{align}
    Here, observe that $\#N = \prod_{p\in \sP} \#N_{p^m,Q}$, $\#\Aut_S(N)=\prod_{p\in \sP}\#\Aut_{S_{p^m}}(N_{p^m,Q})$, and note that $\#\Hom_{S}\bigl(N, \bbF_p[x]/(Q_{i,p}(x)\bbF_p[x]) \bigr) = \#\Hom_{S_{p^m}}(N_{p^m,Q}, \bbF_p[x]/(Q_{i,p}(x)\bbF_p[x]))$. 
    Further, since $\Ext$ takes with direct sums in the first argument to products \cite[Proposition 3.3.4]{weibel1994introduction}, 
    \begin{align} 
        \#\Ext_S^1(N,\bbF_p[x]/(Q_{i,p}(x)\bbF_p[x])) \label{eq:SlowCat} 
        &= \prod_{q\in \sP} \#\Ext_{S}^1(N_{q^m,Q},\bbF_p[x]/(Q_{i,p}(x)\bbF_p[x])). 
    \end{align} 
    Finally, using the definition of $\Ext^1$ in terms of short exact sequences \cite[Theorem 3.4.3]{weibel1994introduction} together with the fact that finite $S$-modules correspond to tuples of $S_{p^m}$-modules, one can verify that 
    \begin{align}
        \#\Ext_{S}^1(N_{q^m,Q},\bbF_p[x]/(Q_{i,p}(x)\bbF_p[x])) &= 
        \begin{cases}    
            \#\Ext_{S_{p^m}}^1(N_{p^m,Q},\bbF_p[x]/(Q_{i,p}(x)\bbF_p[x])) & \text{if }p=q,\\ 
            1 & \text{else}.
        \end{cases}\label{eq:NormalMom}
    \end{align}
    Combine \eqref{eq:SafeHat}--\eqref{eq:NormalMom} to conclude the proof.     
\end{proof}

\section{Future work}\label{sec: FutureWork}
Ultimately, we would like to show that the conditions of \Cref{thm: Wang} are satisfied with nonvanishing probability.
The current paper makes progress in this direction: we now have concrete proof techniques which can be used to study cokernels of walk matrices.
There are however still a number of interesting open problems. 

For instance, it remains entirely open to understand whether the condition $\coker(\bW)_{2^2} \cong (\bbZ/2\bbZ)^{\lfloor n/2\rfloor}$ is often satisfied, even heuristically.
This is because the distribution of $\coker(\bW)_{2^m}$ is highly sensitive to the graph being simple which makes approximation by results for directed graphs inadequate.
Indeed, when $\bX$ is the adjacency matrix of a simple graph and $b=e$ is the all-ones vector, then \cite[Lemma 14]{wang2013generalized} implies that $\rank_2\bW \leq \lceil n/2\rceil$. 
Equivalently, it holds that $\coker(\bW)_2 \cong(\bbZ/2\bbZ)^{\ell}$ for some $\ell \geq \lfloor n/2\rfloor$. 
This is very different behavior from the distribution for directed graphs in \Cref{thm: MainResult_Uniform} with $p=2$ since the latter is concentrated on small groups.

For odd primes, numerical evidence suggest that the difference is not as severe. 
\Cref{tab: ErdosRenyiExperiment} namely suggest that the distribution of $\coker(\bW)_{p^m}$ has qualitatively similar behavior for simple and directed graphs. 
Quantitatively, however, a close inspection shows that there is a small but nonzero difference which does not seem to disappear when $n$ grows large, suggesting that the limiting distribution for simple graphs differs from the one for random directed and weighted graphs. 
For instance, the estimated probabilities that $\coker(\bW)_{p^2}\in \{0,\bbZ/p\bbZ \}$ for $p=3$ is $0.758\pm 0.002$ whereas \Cref{thm: MainResult_Uniform} predicts $0.747$.

So, the extension of our results to the setting of simple graphs poses an interesting problem, both for odd and even primes. 
We intend to pursue this in future work.
Let us finally recall \Cref{conj: OddPrimeWellBehaved} and note that a proof of this conjecture would also be valuable since the underlying challenges are also likely to show up in the study of simple graphs. 
In support of this conjecture, we present numerical data in \Cref{tab: DirectedExperiment} which suggests that the conclusion of \Cref{thm: MainResult_Uniform} remains valid for unweighted directed graphs.

\section*{Acknowledgments}
I would like to thank Nils van de Berg, Jaron Sanders, and Haodong Zhu for providing helpful feedback on a draft of this manuscript. 
I further thank Aida Abiad for an inspiring talk which motivated my interest in the spectral determinacy of graphs.

This work is part of the project Clustering and Spectral Concentration in Markov Chains with project number OCENW.KLEIN.324 of the research programme
Open Competition Domain Science – M which is partly financed by the Dutch Research
Council (NWO). 

\newpage 

\begin{table}[t]
    \begin{center}
    \resizebox{0.85\textwidth}{!}{
    \begin{tabular}{ |c|c|c|c|c|c|c|c| }
        \hline
        $p$&$n =10$&$n=12$&$n=15$&$n = 20$&$n=30$&$n=40$&\Cref{thm: MainResult_Uniform}\\
        \hline
        3&$0.495$&$0.625$&$0.726$&$0.757$&$0.756$&$0.758$&$0.746834\ldots$\\
        5& $0.549$&$0.725$&$0.869$&$0.913$&$0.914$&$0.915$&$0.912399\ldots$\\
        7& $0.563$&$0.750$&$0.906$&$0.953$&$0.956$&$0.957$&$0.956337\ldots$\\ 
        11&$0.571$&$0.765$&$0.930$&$0.981$&$0.983$&$0.983$&$0.982726\ldots$\\  
        \hline
      \end{tabular}}
    \end{center}
    \caption{Probability that $\coker(\bW)_{p^2}\in \{0,\bbZ/p\bbZ \}$ when $\bX$ is the adjacency matrix of an undirected Erd\H{o}s--R\'enyi random graph on $n$ nodes and $b = e$, estimated based on $10^5$ independent samples. 
    The estimated values have an uncertainty of $\pm 0.002$.
    Also displayed is the limiting probability in the case of directed and weighted random graphs which follows from \Cref{thm: MainResult_Uniform} with $m=2$.
    Computation of the group structure of $\coker(\bW)_{p^2}$ was done using the algorithm \texttt{smith\_form} in \texttt{SageMath} \cite{sagemath}.    
    }
    \label{tab: ErdosRenyiExperiment}
    \end{table}
    \begin{table}[t]
        \begin{center}        
        \resizebox{0.85\textwidth}{!}{
        \begin{tabular}{ |c|c|c|c|c|c|c|c| }
            \hline
            $p$&$n =10$&$n=12$&$n=15$&$n = 20$&$n=30$&$n=40$&\Cref{thm: MainResult_Uniform}\\
            \hline
            3&$0.650$&$0.707$&$0.737$&$0.746$&$0.749$&$0.747$&$0.746834\ldots$\\
            5&$0.759$&$0.844$&$0.898$&$0.911$&$0.911$&$0.912$&$0.912399\ldots$\\
            7&$0.786$&$0.881$&$0.940$&$0.956$&$0.957$&$0.956$&$0.956337\ldots$\\ 
            11&$0.802$&$0.901$&$0.965$&$0.982$&$0.983$&$0.983$&$0.982726\ldots$\\  
            \hline
          \end{tabular}}
        \end{center}
        \caption{Probability that $\coker(\bW)_{p^2}\in \{0,\bbZ/p\bbZ \}$ when $\bX\sim \Unif\{0,1 \}^{n\times n}$ is the adjacency matrix of an unweighted directed random graph and $b = e$.
        The same comments as in the caption of \Cref{tab: ErdosRenyiExperiment} apply: the estimation used $10^5$ independent samples, there is an uncertainty of $\pm 0.002$, and \texttt{SageMath} \cite{sagemath} was used. 
        }
        \label{tab: DirectedExperiment}
        \end{table}

\bibliographystyle{plainurl}

\end{document}